\newtheorem{theorem}{Theorem}
\newtheorem*{theorem*}{Theorem}
\newtheorem{proposition}[theorem]{Proposition}
\newtheorem{lemma}[theorem]{Lemma}
\newtheorem{corollary}[theorem]{Corollary}
\theoremstyle{definition}
\theoremstyle{remark}
\newtheorem{remark}[theorem]{Remark}
\newcommand{\Lip}{\operatorname{Lip}_0}
\newcommand{\Lipp}{\operatorname{Lip}}
\newcommand{\R}{\mathbb{R}}
\newcommand{\N}{\mathbb{N}}
\newcommand{\F}{\mathcal{F}}
\newcommand{\Cell}{\operatorname{c}}
\newcommand{\Dens}{\operatorname{dens}}
\newcommand{\Weight}{\operatorname{w}}
\definecolor{orange}{rgb}{1,0.5,0}
\begin{document}
\title[On the geometry Banach spaces of the form $\Lip(C(K))$]{On the geometry of Banach spaces of the form $\Lip(C(K))$}
\author[L. Candido]{Leandro Candido}
\address{Universidade Federal de S\~ao Paulo - UNIFESP. Instituto de Ci\^encia e Tecnologia. Departamento de Matem\'atica. S\~ao Jos\'e dos Campos - SP, Brasil}
\email{leandro.candido@unifesp.br}
\author[P. L. Kaufmann]{Pedro L. Kaufmann}
\address{Universidade Federal de S\~ao Paulo - UNIFESP. Instituto de Ci\^encia e Tecnologia. Departamento de Matem\'atica. S\~ao Jos\'e dos Campos - SP, Brasil}
\email{plkaufmann@unifesp.br}

\subjclass[2010]{46E15, 46B03 (primary), and 46B26 (secondary)}

\thanks{Both authors were suported by grant 2016/25574-8, São Paulo Research Foundation (FAPESP). 
P. L. Kaufmann was supported additionally by grant 2017/18623-5, FAPESP}

\keywords{Lipschitz functions, isomorphisms between Banach spaces, Lipschitz-free spaces, spaces of continuous functions}

\begin{abstract}We investigate the problem of classifying the Banach spaces $\Lip(C(K))$ for Hausdorff compacta $K$. In particular, sufficient conditions are established for a space  $\Lip(C(K))$ to be isomorphic to  $\Lip(c_0(\varGamma))$ for some uncountable set $\varGamma$. %Examples are presented of nonseparable $C(K)$ spaces which are not isomorphic to a $c_0(\varGamma)$ space, but such that  $\Lip(C(K))$ is isomorphic to  $\Lip(c_0(\varGamma))$ for some $\varGamma$.
\end{abstract}

\maketitle

\section{Introduction}

In the past couple of decades, Banach spaces of real-valued Lipschitz functions and especially their canonical preduals, most often called \emph{Lipschitz-free spaces}, have received a lot of attention from many authors. Lipschitz maps between metric spaces can be interpreted as linear operators between the corresponding Lipschitz-free spaces. These Banach spaces thus encode the Lipschitz structure of metric spaces in a natural way (we refer to Weaver's book \cite{Weaver} - where these spaces are called Arens-Eells spaces - for a comprehensive introduction), and studying its geometry presents some challenges. For instance, when $X$ and $Y$ are Lipschitz equivalent metric spaces, the corresponding Lipschitz-free spaces $\F(X)$ and $\F(Y)$ are isomorphic, thus also the spaces of Lipschitz functions $\Lip(X)$ and $\Lip(Y)$ are isomorphic. It is natural to investigate converse statements. 
The first example of non-Lipschitz equivalent \emph{Banach} spaces with isomorphic Lipschitz-free spaces  was given by Dutrieux and Ferenczi in \cite{DutFec}. There they show that, for each infinite compact metric space $K$, $\F(C(K))$ is isomorphic to $\F(c_0)$, with an uniform bound on the Banach-Mazur distance. %In this case, when $K$ has infinite Cantor-Bendixson height, $C(K)$ is not even uniformly homeomorphic to $c_0$ due to \cite[Theorem 3.1]{JohnLindSchecht}. 
 It is worth pointing out that there are still fundamental problems which remain open in this sense. For instance, it is unknown whether $\F(\R^n)$ and $\F(\R^m)$ are isomorphic or not, when $n$ and $m$ are distinct and greater or equal to $2$. The same can be said even about $\Lip(\R^2)$ and $\Lip(X)$, where $X$ is a separable Banach space other than $\R$ or $\R^2$. The fact $\F(\R)$ and $\F(\R^2)$ are non-isomorphic can be derived from a deep result by Naor and Schechtman \cite{NaorSchet}. 

In this work we present a development in this direction, establishing sufficient conditions for a space  $\Lip(C(K))$ to be isomorphic to  $\Lip(c_0(\varGamma))$ for some uncountable set $\varGamma$.
%- \textcolor{magenta}{obtaining in particular examples of nonseparable Banach spaces $X$, $Y$ which are not uniformly homeomorphic, but such that $\Lip(X)$ and $\Lip(Y)$ are isomorphic.} \textcolor{green}{Talvez n\~ao dever\'iamos dar tanta enfase aqui pois isso foi obtido em \cite[Example 7.12]{BenLind}}
We do so by studying the structure of $\Lip(X)$ and Lipschitz-free spaces, while exploring new tools that were discovered after the publication of \cite{DutFec}. 

The remainder of this article is organized as follows. %In Section  we present a number of lemmas and propositions that will instrumental for the proof of our main results. 
In Section \ref{sec:term} we establish notation that will be used. In Section \ref{sec:main} we present our main results, which are proved in Sections \ref{Sec-Aux} and \ref{Sec-Proofs}. 
% \begin{quote}
%     \textbf{Problem A)} If $\F(X)$ and $\F(Y)$ are isomorphic, are $X$ and $Y$ Lipschitz equivalent?
% \end{quote}
% \begin{quote}
%     \textbf{Problem B)} If $\Lip(X)$ and $\Lip(Y)$ are isomorphic, are $X$ and $Y$ Lipschitz equivalent?
% \end{quote}
% Even when we re

\section{Terminology}\label{sec:term}

In this section we establish the notation adopted throughout the article. Given a metric space $X$ with a distinguished point $0$, we denote by $\Lip(X)$ the Banach space of real-valued Lipschitz functions on $X$ that vanish at $0$, endowed with the norm of the \emph{smallest Lipschitz constant}, i.e. \[\|f\|_{\Lipp}=\sup_{x\neq y\in X} \frac{|f(x)-f(y)|}{d(x,y)}.\] 
The Lipschitz-free space over $X$, denoted by $\F(X)$, is the canonical predual to $\Lip(X)$ given by the closed linear span of the evaluation functionals %($f\mapsto \delta_X(x)(f)=f(x),\,x\in X$) 
in $\Lip(X)^*$. %We shall use some basic properties of theses spaces. First, on bounded sets of $\Lip(X)$, the $w^*$ topology with respect to $\F(X)$ coincides with the topology of pointwise convergence. Second, given a Lipschitz function $f:X\to Y$ between metric spaces such that $f(0_X)=0_Y$, there is a unique bounded linear operator $T_f:\F(X)\to \F(Y)$ such that $T_f\circ \delta_M=\delta_N\circ f$. From this it follows, as it was mentioned in Section 1, that Lipschitz equivalent metric spaces have isomprhic Lipschitz-free spaces. Moreover, whenever $X$ is Lipschitz equivalent to a Lipschitz retract of $Y$ (i.e. a metric subspace $Z\subset Y$ such that there exists a Lipschitz function $R:Y\to Z$ with $R(a)=a,\,a\in Z$), $\F(X)$ is isomorphic to a complemented subspace
We refer the reader to \cite{God}, \cite{GodKal} and \cite{Weaver} for more details about these spaces. 

For a Hausdorff compactum $K$, we denote by $C(K)$ the Banach space of all continuous functions $f:K\to \R$, equipped with the norm \[\|f\|_{\infty}=\sup_{k\in K}|f(k)|.\]
Given set $\varGamma$, $\ell_{\infty}(\varGamma)$ denotes the space of all bounded functions $(a_i)_i\in \R^{\varGamma}$, endowed with the norm 
$\|(a_i)_i\|=\sup_{i\in \varGamma}|a_i|$. We denote by $c_0(\varGamma)$ the subspace of $\ell_{\infty}(\varGamma)$ consisting of all functions $(a_i)_i\in \R^{\varGamma}$ such that, for each $\epsilon>0$ the set $\{i\in \varGamma: |a_i|\geq \epsilon\}$ is finite. In particular, we denote $C_0(\N)$ by $c_0$ and $\ell_\infty(\N)=\ell_{\infty}$ as usual.
For a complete survey on $C(K)$ spaces, we refer the reader to \cite{Se}. 

Given an arbitrary topological space $X$, the \emph{weight} of $X$, denoted by $\Weight(X)$, is the smallest cardinality of a base for $X$. The \emph{density character} $\Dens(X)$ is the smallest cardinality of a dense subset of $X$. If $X$ is a Banach space, $\Dens(X)$ denotes the density of the norm topology and if $X$ is a dual space, $w^*$-$\Dens(X)$ denotes the density of its $w^*$-topology. A \emph{cellular family} in $X$ is a collection of nonempty pairwise disjoint open subsets of $X$. The \emph{cellularity} of $X$, denoted by $\Cell(X)$, is the supremum of the cardinalities of every cellular family in $X$. 

Given two Banach spaces $X$ and $Y$ we will write $X\sim Y$ when $X$ and $Y$ are isomorphic, and we write $X\cong Y$ when they are isometrically isomorphic. 
%The \emph{Banach-Mazur distance} between isomorphic Banach spaces $X$ and $Y$ is $\sup\{\|T\|\|T^{-1}\|\,|\, T:X\to Y\mbox{ is an isomorphism}\}$.  
When a Banach space $Y$ contains a subspace that is isomorphic to $X$, we write $X\hookrightarrow Y$. If that subspace is, moreover, complemented in $Y$, we write $X\stackrel{c}{\hookrightarrow}Y$. 

All other standard terminology from Banach space theory and set-theoretic topology we will adopted as in \cite{Biorthog}. 

\section{Objective and main results}\label{sec:main}

% In the past decade the Lipschitz free spaces theory become one of the most important topic of research among the nonlinear functional analysts. Since the publication of Weaver's seminal book \cite{Weaver} a great amount of progress has been made. There are still, however, many important open problems concerning the geometry of $\F(X)$ and $\Lip(X)$ to this date. For example, it is not known whether $\Lip(\R^2)\sim \Lip(\R^3)$ (in \cite{NaorSchet} it was established that $\Lip(\R)\not\sim \Lip(\R^2)$). As far as we known, it is not known even wether $\Lip(\R^2)\sim \Lip(X)$ for an infinite dimensional separable Banach space $X$. ({\color{blue}Suponha que $X$ é separável Banach sem AP. Pelo Teorema 2.6 do Casazza (Approximation properties, Handbook), $\Lip(X)$ não satisfaz AP. A pergunta de 1 milhão de dólares é: $\Lip(\R^2)\sim \Lip(\Z^2)$ tem AP?})

% In the context of nonseparable Banach space, it is still not know whether there are Banach spaces $X$ and $Y$ of the same uncountable density character such that their $\Lip(X)\not\sim \Lip(Y)$. Finding examples in this setting may be a good step for achieving the solution for the separable case. 

In this section we outline and motivate the problem we are addressing, and state the main results obtained in this direction, which are Theorem \ref{Mainresultofthepaper} and its Corollary \ref{cor1}, and Theorem \ref{mainellinfty}.  Our focus is on investigating the geometry of Banach spaces of the form $\F(X)$ and $\Lip(X)$, where $X$ is a nonseparable $C(K)$ space. 
This research is motivated by the following result, already mentioned in the introduction: 

\begin{theorem}[Dutrieux, Ferenczi \cite{DutFec}]\label{DF} For any infinite metric compact space $K$,   the spaces $\F(C(K))$ and $\F(c_0)$ are isomorphic.
\end{theorem}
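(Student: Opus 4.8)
The plan is to reduce the statement to two facts: first, that $C(K)$ is Lipschitz equivalent — in fact linearly isomorphic — to a very concrete space; and second, that $\F$ transforms this concrete structure into $\F(c_0)$ with uniform control. Since $K$ is an infinite metric compactum, it is homeomorphic to a closed subset of the Hilbert cube, and by Miljutin's theorem every infinite metric compactum $K$ has $C(K)$ linearly isomorphic to $C(2^{\mathbb N})=C(\Delta)$, the space of continuous functions on the Cantor set (when $K$ is uncountable; the countable case is handled separately via Bessaga--Pełczyński, but one can also absorb it since we only need a Lipschitz — indeed isomorphic — embedding both ways). Thus $\F(C(K))\sim\F(C(\Delta))$, because a linear isomorphism $T\colon C(K)\to C(\Delta)$ is in particular a Lipschitz equivalence, hence induces an isomorphism $\widehat T\colon\F(C(K))\to\F(C(\Delta))$ with $\|\widehat T\|\,\|\widehat T^{-1}\|\le \|T\|\,\|T^{-1}\|$, giving the uniform Banach--Mazur bound. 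So it suffices to prove $\F(C(\Delta))\sim\F(c_0)$.

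Next I would exploit the self-similarity of $C(\Delta)$. Writing $\Delta=\Delta_0\sqcup\Delta_1$ as a disjoint union of two clopen copies of $\Delta$, one gets $C(\Delta)\cong C(\Delta)\oplus_\infty C(\Delta)$, and iterating, $C(\Delta)\cong c_0(C(\Delta))$ in the $\ell_\infty$/$c_0$-sum sense appropriate to $C$-spaces. The key structural step is then a stability result for Lipschitz-free spaces under such sums: I would aim to show $\F\big(c_0(X_n)\big)\sim\big(\sum\F(X_n)\big)_{\ell_1}$, or at least $\F(X\oplus_\infty X)\sim \F(X)\oplus_1\F(X)$ with uniform constants, the point being that in a metric space of the form $X\times Y$ with the max metric and basepoint $(0,0)$, the free space decomposes (up to isomorphism) as $\F(X)\oplus\F(Y)$ — this is a standard but delicate fact, since the naive projections $\pi_X,\pi_Y$ are $1$-Lipschitz and their induced maps on $\F$ give a bounded decomposition. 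Combined with $C(\Delta)\cong c_0(C(\Delta))$, one obtains $\F(C(\Delta))\sim \ell_1(\F(C(\Delta)))$, so $\F(C(\Delta))$ is isomorphic to its own $\ell_1$-sum.

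Finally I would identify $\F(c_0)$ the same way: since $c_0\cong c_0(c_0)$ (as a $c_0$-sum of copies of $c_0$, isometrically after an obvious rescaling), the same stability result yields $\F(c_0)\sim\ell_1(\F(c_0))$. To conclude $\F(C(\Delta))\sim\F(c_0)$ from the fact that each is isomorphic to its own $\ell_1$-sum, I would run a Pełczyński decomposition argument: it suffices to show each of $\F(c_0)$ and $\F(C(\Delta))$ is isomorphic to a complemented subspace of the other. One direction is clear because $c_0$ embeds isometrically (as a subset, with basepoint $0$) into $C(\Delta)$ in a Lipschitz-complemented way — indeed via a linear isometry onto a $1$-complemented subspace — so $\F(c_0)\complemented\F(C(\Delta))$; the reverse embedding $C(\Delta)\hookrightarrow c_0$? — here instead one uses that $C(\Delta)$, being separable, Lipschitz embeds into $c_0$ (Aharoni's theorem) as a Lipschitz retract is \emph{not} automatic, so the cleaner route is: $C(\Delta)\sim C(\Delta)\oplus c_0$ trivially, hence $\F(C(\Delta))\sim\F(C(\Delta))\oplus\F(c_0)$, and symmetrically, and then feed both relations together with the self-$\ell_1$-sum property into the decomposition scheme $X\sim X\oplus X\sim\ell_1(X)$, $Y\sim\ell_1(Y)$, $X\sim Y\oplus Z$, $Y\sim X\oplus W$ $\Rightarrow X\sim Y$.

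\medskip

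\noindent\textbf{Main obstacle.} The crux is the free-space stability under $c_0$-sums, i.e. establishing $\F\big(c_0(X_n)\big)\sim\big(\sum_n\F(X_n)\big)_{\ell_1}$ with constants independent of the number of summands. The finite case $\F(X\times_\infty Y)\sim\F(X)\oplus\F(Y)$ follows from the $1$-Lipschitz coordinate projections and basepoint-preserving inclusions, but passing to the infinite $c_0$-sum requires care: points of $c_0(X_n)$ with many large coordinates must be controlled so that the decomposition operators remain bounded, and one must verify that the "tail" contributes nothing in the limit — this is exactly where Dutrieux and Ferenczi's argument does its real work, and where I would expect to invest most of the effort, likely by approximating finitely-supported measures and using a telescoping/gliding-hump estimate to bound the norm of the reconstruction map uniformly.
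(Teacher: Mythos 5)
There is a genuine gap, and it sits exactly at the point your plan leans on hardest. The ``standard but delicate fact'' you invoke --- that $\F(X\times_\infty Y)\sim \F(X)\oplus_1\F(Y)$, and more generally $\F\bigl((\bigoplus_n X_n)_{c_0}\bigr)\sim\bigl(\bigoplus_n\F(X_n)\bigr)_{\ell_1}$ --- is false. Already for $X=Y=\R$ one has $\R\oplus_\infty\R$ bi-Lipschitz equivalent to the Euclidean plane, so $\F(\R\times_\infty\R)\cong\F(\R^2)$, which by the Naor--Schechtman result quoted in the introduction is not isomorphic to $L_1\cong\F(\R)\oplus_1\F(\R)$. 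The coordinate projections $\pi_X,\pi_Y$ do induce a bounded operator $\F(X\times Y)\to\F(X)\oplus_1\F(Y)$, but it annihilates all the ``mixed'' molecules and is nowhere near injective; what is true (and what both Dutrieux--Ferenczi and Proposition \ref{Prop1}/Corollary \ref{applic1} of this paper establish) is only the one-sided statement $\bigl(\bigoplus_j\F(Y_j)\bigr)_{\ell_1}\stackrel{c}{\hookrightarrow}\F\bigl((\bigoplus_j Y_j)_{c_0}\bigr)$. With the two-sided formula gone, your route to the hard half of the Pe{\l}czy\'nski scheme --- a complemented embedding $\F(C(\Delta))\stackrel{c}{\hookrightarrow}\F(c_0)$ --- disappears entirely: your fallback ``$C(\Delta)\sim C(\Delta)\oplus c_0$, hence $\F(C(\Delta))\sim\F(C(\Delta))\oplus\F(c_0)$'' uses the same false formula, and in any case only reproduces the easy direction $\F(c_0)\stackrel{c}{\hookrightarrow}\F(C(\Delta))$ (which you do obtain correctly via Sobczyk or via $c_0$ being an absolute Lipschitz retract, cf.\ Proposition \ref{Retract} and \cite[Lemma 3]{DutFec}).

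The missing idea is a local approximation of $C(K)$ by finite-dimensional $\ell_\infty^{k_n}$'s: for every infinite metric compactum $K$ (no Milutin needed, which also disposes of the countable case that your plan leaves dangling, since $C(K)$ for countable $K$ need not be isomorphic to $C(\Delta)$), $C(K)$ is the closed union of an increasing sequence of subspaces $E_n$ isometric to $\ell_\infty^{k_n}$ with uniformly bounded compatible projections. One then shows --- this is the ultrafilter-limit argument of Lemma \ref{Lemma2} and Proposition \ref{zerodim}, or its free-space version in \cite{DutFec} --- that $\F(C(K))$ is complemented in $\bigl(\bigoplus_n\F(\ell_\infty^{k_n})\bigr)_{\ell_1}$, and the latter is complemented in $\F\bigl((\bigoplus_n\ell_\infty^{k_n})_{c_0}\bigr)\cong\F(c_0)$ by the one-sided lemma. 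Combining with $\F(c_0)\stackrel{c}{\hookrightarrow}\F(C(K))$ and with $\F(c_0)\sim\bigl(\bigoplus_n\F(c_0)\bigr)_{\ell_1}$ (which again must come from \cite[Theorem 3.1]{Kauf} or from the retraction structure of $c_0$, not from the false sum formula), Pe{\l}czy\'nski's decomposition finishes the proof. Your high-level architecture (two complemented embeddings plus a decomposition method) matches the actual one, but the key lemma you propose to prove cannot be proved because it is not true.
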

This gives in particular examples of pairs of Banach spaces having isomorphic Lipschitz-free spaces, but which are not even uniformly homeomorphic (due to a result by Johnson, Lindenstrauss and Schechtman, \cite[Theorem 3.1]{JohnLindSchecht}).
%\textcolor{magenta}{Acho que quando $K$ \'e m\'etrico $C(K)$ é 2-Lipschitz equivalente a $c_0$, \'e um teorema de Aharoni}
%$C(K)$ is not even uniformly homeomorphic to $c_0$ due to \cite[Theorem 3.1]{JohnLindSchecht}. 
We want to obtain a similar result for general compacta. A first observation in that direction is that, if $\F(C(K))\sim \F(c_0(\kappa))$, for some infinite cardinal $\kappa$, then $\kappa=\Weight(K)$. Indeed, it is well known and straightforward, on one hand, that $\Dens(C(K))=\Weight(K)$ and $\Dens(c_0(\kappa))=\kappa$, and on the other hand, that $\Dens(\F(M))=\Dens(M)$ for any metric space $M$. But a little more can be said:

\begin{proposition}\label{lipisoweightequal}
 For each infinite Hausdorff compactum $K$ and each infinite cardinal $\kappa$, 
 \[\Lip(C(K))\sim \Lip(c_0(\kappa))\Rightarrow \Weight(K)=\kappa.\]
\begin{proof}
From Goldstine's theorem, $w^*$-$\Dens(X^{**}) \leq \Dens(X)$ for any Banach space $X$. Then, for any infinite metric space $M$, $w^*$-$\Dens(\Lip(M)^{*})\leq \Dens(\F(M))=\Dens(M)$. On the other hand, in \cite[Proposition 3]{HajNov} Hájek and Novotn\'y established that $\Lip(M)$ contains a copy of $\ell_{\infty}(\kappa)$, where $\kappa=\Dens(M)$. With an application of \cite[Fact 4.10]{Biorthog} we deduce that $w^*$-$\Dens(\Lip(M)^{*})\geq \Dens(M)$, and consequently
\[w^*\text{-}\Dens(\Lip(M)^{*})=\Dens(M).\]
The result follows immediately. 
% Recalling that $\Dens(C(K))=\Weight(K)$ for any infinite dimensional $C(K)$ space, and that $\Dens(c_0(\kappa))=\kappa$ for any infinite cardinal $\kappa$,
% %, for a Banach space $C(K)$ where $K$ is an infinite Hausdorff compactum, it is well known that $\Dens(C(K))=\Weight(K)$. Furthermore, if $\kappa$ is an infinite cardinal, $\Dens(c_0(\kappa))=\kappa$ and 
% we deduce the following:
\end{proof}
\end{proposition}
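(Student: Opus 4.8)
The plan is to isolate an isomorphic invariant that, evaluated on the two sides, recovers $\Weight(K)$ and $\kappa$ respectively; the natural candidate is the $w^*$-density character of the dual. First I would record the (routine) observation that an isomorphism $T\colon X\to Y$ induces a surjective $w^*$-to-$w^*$ homeomorphism $T^*\colon Y^*\to X^*$ — both $T^*$ and $(T^{-1})^*$ are $w^*$-continuous — so that $w^*\text{-}\Dens(X^*)=w^*\text{-}\Dens(Y^*)$. Consequently $\Lip(C(K))\sim\Lip(c_0(\kappa))$ forces $w^*\text{-}\Dens(\Lip(C(K))^*)=w^*\text{-}\Dens(\Lip(c_0(\kappa))^*)$, and it remains to compute both sides.

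The heart of the matter is the identity $w^*\text{-}\Dens(\Lip(M)^*)=\Dens(M)$ for every infinite metric space $M$. For the inequality ``$\le$'', use $\Lip(M)^*=\F(M)^{**}$: by Goldstine's theorem the canonical image of $\F(M)$ is $w^*$-dense in $\F(M)^{**}$, so any norm-dense subset of $\F(M)$ of cardinality $\Dens(\F(M))$ is $w^*$-dense in $\Lip(M)^*$; and $\Dens(\F(M))=\Dens(M)$ because the normalized differences $(\delta_x-\delta_y)/d(x,y)$ ranging over a dense set of pairs span a dense subspace of $\F(M)$. For ``$\ge$'', I would invoke the theorem of Hájek and Novotn\'y \cite[Proposition 3]{HajNov} that $\Lip(M)$ contains an isomorphic copy $Z$ of $\ell_\infty(\kappa)$ with $\kappa=\Dens(M)$. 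The restriction map $\Lip(M)^*\to Z^*$ is surjective (Hahn--Banach) and $w^*$-to-$w^*$ continuous, hence cannot increase $w^*$-density the wrong way: $w^*\text{-}\Dens(Z^*)\le w^*\text{-}\Dens(\Lip(M)^*)$. Combining this with the fact that a space containing $\ell_\infty(\kappa)$ has dual of $w^*$-density at least $\kappa$ (\cite[Fact 4.10]{Biorthog}), we get $w^*\text{-}\Dens(\Lip(M)^*)\ge\kappa=\Dens(M)$.

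Finally, applying the identity to $M=C(K)$ and $M=c_0(\kappa)$, and using the classical facts $\Dens(C(K))=\Weight(K)$ and $\Dens(c_0(\kappa))=\kappa$, the equality of $w^*$-densities obtained in the first paragraph collapses to $\Weight(K)=\kappa$, which is the desired implication.

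I expect the main obstacle to be the lower bound $w^*\text{-}\Dens(\Lip(M)^*)\ge\Dens(M)$, which genuinely relies on the nontrivial embedding of $\ell_\infty(\kappa)$ into $\Lip(M)$; one also has to be a touch careful that passing from $\Lip(M)^*$ to the dual of a subspace via restriction does not destroy $w^*$-density, though this is the easy direction since restriction is onto and $w^*$-continuous. The upper bound via Goldstine and the isomorphism-invariance of $w^*$-density are both routine.
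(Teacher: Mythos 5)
Your proposal is correct and follows essentially the same route as the paper: the invariant is $w^*$-$\Dens(\Lip(M)^*)=\Dens(M)$, proved from above via Goldstine's theorem and $\Dens(\F(M))=\Dens(M)$, and from below via the H\'ajek--Novotn\'y embedding of $\ell_\infty(\Dens(M))$ into $\Lip(M)$ together with \cite[Fact 4.10]{Biorthog}. The only difference is that you spell out the routine steps (isomorphism-invariance of $w^*$-density and the passage through the subspace copy of $\ell_\infty(\kappa)$) that the paper leaves implicit.
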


In view of Theorem \ref{DF}, we may pose the following question to guide our investigation.
%In this work we aim to find conditions under which the converse holds. More generally, we can pose the following.  
\smallskip

\begin{quote}\textbf{Question 1:} let $K$ be an infinite Hausdorff compactum such that $\Weight(K)=\kappa$. Under which conditions on $K$ and $\kappa$ can we guarantee that $\F(C(K))$ and  $\F(c_0(\kappa))$ isomorphic, or at least that $\Lip(C(K))$ and  $\Lip(c_0(\kappa))$ isomorphic?
\end{quote}

\smallskip

As a part of this work, we obtain partial answers to Question 1 in the dual level, see Corollary \ref{cor1} and Theorem \ref{mainellinfty} below. We hope that the pursuit of a complete answer will stimulate the exploration of different techniques connecting the fields of $C(K)$ spaces and Lipschitz-free spaces, and their dual counterparts, $\Lip(X)$ spaces.  \medskip

%In the following, $\mathcal{B}$ will denote the class of all Hausdorff compacta $K$ such that there is some Hausdorff compact and zero-dimensional space $L$ with  $C(K)\sim C(L)$.  

For technical reasons that will become clear in the next section, in what follows we focus our attention on Hausdorff compact spaces $K$ with the property that $C(K)\sim C(L)$ for some zero-dimensional Hausdorff compact space $L$, depending on $K$. Let us denote by $\mathcal{B}$ the class of all such spaces.  In the sixties, Pe\l czy\'nski posed the question of whether every Hausdorff compact space was a member of $\mathcal{B}$. In \cite{Kosz} Koszmider answered negatively this question by constructing a counter-example. Later, other examples of the same type were obtained in \cite{AvKosh}. As this historical note suggests, $\mathcal{B}$ comprehends a wide class of Hausdorff compacta one may encounter. Our main result is the following: 

\begin{theorem}\label{Mainresultofthepaper}Let $K$ be an element of $\mathcal{B}$ of weight $\Weight(K)=\kappa$, having a cellular family of cardinality $\gamma$. Then
\[\Lip(c_0(\gamma))\stackrel{c}{\hookrightarrow}\Lip(C(K))\stackrel{c}{\hookrightarrow} \Lip(c_0(\kappa)).\]
\end{theorem}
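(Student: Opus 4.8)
The strategy is to reduce everything to the zero‑dimensional case and then exploit two complemented embeddings of $c_0(\cdot)$‑type spaces into $C(K)$, translating them through the Lipschitz‑free / Lipschitz functor. Since $K\in\mathcal B$, we have $C(K)\sim C(L)$ for some zero‑dimensional compact $L$ with $\Weight(L)=\Weight(K)=\kappa$, and $L$ carries a cellular family of cardinality $\gamma$ as well (a cellular family in $K$ pulls back through the isomorphism only at the level of $C(K)$, so one should instead argue directly that $\Weight(K)=\kappa$ forces $L$ to have $\kappa$ many clopen sets, and that a cellular family of size $\gamma$ in $K$ produces $\gamma$ many pairwise disjoint nonempty clopen subsets of $L$ — this is the kind of standard $C(K)$‑theory fact one wants to isolate as a lemma in Section \ref{Sec-Aux}). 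Because $\Lip(X)\sim\Lip(Y)$ whenever $X\sim Y$ (bi‑Lipschitz, in particular linear, isomorphisms are Lipschitz isomorphisms), it suffices to prove the chain with $K$ replaced by such an $L$.

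For the right‑hand inclusion $\Lip(C(L))\complemented\Lip(c_0(\kappa))$: a zero‑dimensional $L$ of weight $\kappa$ embeds as a subset of $\{0,1\}^\kappa$, and $C(L)$ is a norm‑one complemented subspace of $C(\{0,1\}^\kappa)$ (or one uses a Milutin‑type / averaging argument). The key is that $C(L)$ is linearly complemented in $\ell_\infty(\kappa)$‑like spaces, but more to the point one wants $C(L)$ to be \emph{complemented, via a linear projection that is also a Lipschitz retract datum}, inside $c_0(\kappa)$ in the relevant sense — actually the correct target is that $C(L)$ embeds as a Lipschitz retract of $c_0(\kappa)$. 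Here the plan is to use that a linear isomorphic embedding $T\colon C(L)\to c_0(\kappa)$ with a linear projection $P\colon c_0(\kappa)\to T(C(L))$ induces, by functoriality of $\F$, a complemented embedding $\F(C(L))\complemented\F(c_0(\kappa))$, hence by duality $\Lip(C(L))\complemented\Lip(c_0(\kappa))$. So the real content is: \emph{every zero‑dimensional $L$ with $\Weight(L)=\kappa$ admits a linear isomorphic embedding into $c_0(\kappa)$ with linearly complemented range.} This should follow from writing $L\subseteq\{0,1\}^\kappa$ and using that the coordinate functions plus an Ascoli/partition‑of‑unity argument give such an embedding into $c_0(\kappa)$; membership in $\mathcal B$ is exactly what lets us assume zero‑dimensionality, so this is where that hypothesis is spent.

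For the left‑hand inclusion $\Lip(c_0(\gamma))\complemented\Lip(C(L))$: from $\gamma$ many pairwise disjoint nonempty clopen sets $U_i\subseteq L$ ($i\in\varGamma$, $|\varGamma|=\gamma$), pick points $k_i\in U_i$ and indicator functions $\chi_{U_i}\in C(L)$; the closed span of $\{\chi_{U_i}\}_i$ is isometric to $c_0(\gamma)$ and is $1$‑complemented in $C(L)$ by the projection $f\mapsto\sum_i f(k_i)\chi_{U_i}$. Again passing through $\F$ and dualizing yields $\Lip(c_0(\gamma))\complemented\Lip(C(L))$. The clean way to package both halves is a lemma: if $X\complemented Y$ (Banach spaces), then $\F(X)\complemented\F(Y)$ and hence $\Lip(X)\complemented\Lip(Y)$ — a linear projection $P=J\circ Q$ lifts to $\F(P)=\F(J)\circ\F(Q)$ which is a projection of $\F(Y)$ onto (a space isomorphic to) $\F(X)$, and taking adjoints gives the corresponding statement for $\Lip$; one must only check that $\F$ sends bounded linear maps to bounded linear maps with controlled norms, which is standard.

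The main obstacle, and where I expect the genuine work to be, is the right‑hand embedding $C(L)\hookrightarrow c_0(\kappa)$ with complemented range — i.e., producing a linear isomorphism onto a complemented subspace of $c_0(\kappa)$ for an \emph{arbitrary} zero‑dimensional compact $L$ of weight $\kappa$, not merely of $C(\{0,1\}^\kappa)$ into $\ell_\infty(\kappa)$. One has to be careful: $C(L)$ need not be isomorphic to $c_0(\kappa)$, so "complemented in $c_0(\kappa)$" is a real constraint and presumably uses the Lipschitz‑free viewpoint (e.g.\ Dutrieux–Ferenczi‑style arguments: $\F(C(L))\sim\F(c_0(\kappa))$ might even hold in the metric‑compact case and its nonseparable analogue is what Theorem \ref{DF} suggests, but the nonseparable version is exactly the open part), so likely the paper proves only the weaker complemented‑embedding statement at the free/Lipschitz level rather than a full isomorphism. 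Assembling the two halves then gives the stated sandwich.
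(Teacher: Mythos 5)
The right-hand embedding is where your plan breaks down, and it breaks down at the level of the key lemma you propose rather than in the details. You want a linear isomorphic embedding of $C(L)$ into $c_0(\kappa)$ with linearly complemented range (or at least to realize $C(L)$ as a Lipschitz retract of $c_0(\kappa)$) and then push it through the functor $\F$. No such embedding exists in general: every element of $c_0(\kappa)$ has countable support, so every separable subspace of $c_0(\kappa)$ sits inside a copy of $c_0$ and hence has separable dual; consequently $\ell_1$, and a fortiori $\ell_\infty\cong C(\beta\N)$, does not embed isomorphically into $c_0(\kappa)$ at all. Since $\beta\N$ is zero-dimensional of weight $\mathbf{c}$ (and is exactly the example driving Theorem \ref{mainellinfty}), the statement ``every zero-dimensional compact $L$ of weight $\kappa$ has $C(L)$ complementably embedded in $c_0(\kappa)$'' is false, and the $\{0,1\}^\kappa$/partition-of-unity route cannot rescue it, since $C(\{0,1\}^\kappa)$ contains $\ell_1$ and so does not embed into $c_0(\kappa)$ either. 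You correctly sense that the complementation must happen ``at the free/Lipschitz level,'' but you supply no mechanism, and that mechanism is the actual content of the proof. The paper's route is: approximate $C(L)$ by the finite-dimensional subspaces of functions constant on the members of a finite clopen partition, with norm-one projections $R_j$; an ultrafilter limit along the directed set of partitions (Lemma \ref{Lemma2}, Proposition \ref{zerodim}) yields $\Lip(C(L))\complemented\left(\bigoplus_{\kappa}\Lip(c_0)\right)_{\ell_\infty}$; and a separate, genuinely nonlinear argument (Proposition \ref{Prop1} and Corollary \ref{applic1}, which build a Lipschitz --- not linear --- retraction of $c_0(\kappa)$ onto a ``disjoint union'' of $\kappa$ copies of $c_0$) gives $\left(\bigoplus_{\kappa}\Lip(c_0)\right)_{\ell_\infty}\complemented\Lip(c_0(\kappa))$. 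None of this passes through an embedding of $C(L)$ into $c_0(\kappa)$.

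For the left-hand embedding your idea is essentially sound but you route it through $L$, which creates an avoidable gap: transferring a cellular family of size $\gamma$ from $K$ to $L$ requires the nontrivial converse in Rosenthal's theorem (an isomorphic copy of $c_0(\gamma)$ in $C(L)$, $\gamma$ uncountable, forces a cellular family of size $\gamma$ in $L$), which you only gesture at. The paper works on $K$ directly: Rosenthal's theorem produces an isometric copy of $c_0(\gamma)$ in $C(K)$ from the cellular family of open (not necessarily clopen) sets, so your linear projection $f\mapsto\sum_i f(k_i)\chi_{U_i}$ is unavailable; instead, the fact that $c_0(\gamma)$ is an absolute Lipschitz retract (Proposition \ref{Retract}) supplies a nonlinear Lipschitz retraction of $C(K)$ onto that copy, whence $\Lip(c_0(\gamma))\complemented\Lip(C(K))$ by \cite[Lemma 3]{DutFec}. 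The lesson on both sides is the same: the complemented embeddings in this theorem come from Lipschitz retractions, not from linear projections between the underlying Banach spaces.
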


%If in the previous theorem, if $\gamma=\kappa$, then $\Lip(c_0(\kappa))\stackrel{c}{\hookrightarrow}\Lip(C(K))$ and $\Lip(C(K))\stackrel{c}{\hookrightarrow} \Lip(c_0(\kappa))$. 
When moreover $\gamma=\kappa$, from  \cite[Theorem 3.1]{Kauf} we obtain that the space $\Lip(C(K))$ is isomorphic to $\left(\bigoplus_{n\in \N}\Lip(C(K))\right)_{\ell_\infty}$. An application of Pe\l czy\'nski's decomposition method yields:

\begin{corollary}\label{cor1} 
Let $K$ be an element of $\mathcal{B}$ of weight $\Weight(K)=\kappa$, having a cellular family of size $\kappa$. Then 
\[\Lip(C(K))\sim \Lip(c_0(\kappa)).\] 
\end{corollary}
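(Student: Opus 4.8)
The plan is to deduce Corollary~\ref{cor1} from Theorem~\ref{Mainresultofthepaper} by a standard Pe\l czy\'nski decomposition argument, using the cited self-embedding and $\ell_\infty$-sum structure. First I would record the two complemented embeddings furnished by Theorem~\ref{Mainresultofthepaper} in the special case $\gamma=\kappa$: since $K\in\mathcal{B}$ has weight $\kappa$ and carries a cellular family of size $\kappa$, we get
\[
\Lip(c_0(\kappa))\complemented \Lip(C(K))\complemented \Lip(c_0(\kappa)).
\]
In particular $\Lip(C(K))$ and $\Lip(c_0(\kappa))$ each embed complementably into the other, so there are Banach spaces $A,B$ with $\Lip(c_0(\kappa))\sim \Lip(C(K))\oplus A$ and $\Lip(C(K))\sim \Lip(c_0(\kappa))\oplus B$.

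Next I would invoke \cite[Theorem~3.1]{Kauf}, as the paragraph preceding the corollary indicates, to get that $\Lip(C(K))\sim\left(\bigoplus_{n\in\N}\Lip(C(K))\right)_{\ell_\infty}$; in particular $\Lip(C(K))$ is isomorphic to its own $\ell_\infty$-sum, hence to $\Lip(C(K))\oplus\Lip(C(K))$ and absorbs countable $\ell_\infty$-sums of complemented summands of itself. The same must be verified (or is likewise known from \cite{Kauf}) for $\Lip(c_0(\kappa))$, which is the case $K=[0,\kappa]$-type or more directly the statement that $\Lip(c_0(\kappa))\sim\left(\bigoplus_{n}\Lip(c_0(\kappa))\right)_{\ell_\infty}$; since $c_0(\kappa)$ is isometric to $\left(\bigoplus_n c_0(\kappa)\right)_{c_0}$ this again follows from \cite[Theorem~3.1]{Kauf}. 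With both spaces being ``$\ell_\infty$-decomposable'' in this sense, Pe\l czy\'nski's decomposition method applies directly: writing $X=\Lip(C(K))$, $Y=\Lip(c_0(\kappa))$, from $Y\sim X\oplus A$ we get $\left(\bigoplus_n Y\right)_{\ell_\infty}\sim\left(\bigoplus_n X\right)_{\ell_\infty}\oplus\left(\bigoplus_n A\right)_{\ell_\infty}\sim X\oplus\left(\bigoplus_n A\right)_{\ell_\infty}\sim Y\oplus X$ and then $Y\sim\left(\bigoplus_n Y\right)_{\ell_\infty}\sim X\oplus Y\sim X\oplus\left(\bigoplus_n X\right)_{\ell_\infty}\sim\left(\bigoplus_n X\right)_{\ell_\infty}\sim X$, which is exactly $\Lip(C(K))\sim\Lip(c_0(\kappa))$.

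The only genuinely delicate point — and the step I expect to be the main obstacle — is making sure the decomposition method is run with the correct ``$\Sigma$'' (here the $\ell_\infty$-sum rather than the more familiar $c_0$- or $\ell_p$-sum): one needs both $X$ and $Y$ to be isomorphic to their own countable $\ell_\infty$-sums, \emph{and} one needs the relevant complemented summands $A$, $B$ to be absorbed, which is automatic once $X\sim\left(\bigoplus_n X\right)_{\ell_\infty}$ because then $X$ is isomorphic to $X\oplus X$ and more generally $X\sim X\oplus Z$ for any $Z$ that is complemented in $X$. All of this is supplied by \cite[Theorem~3.1]{Kauf} together with Theorem~\ref{Mainresultofthepaper}; the remaining bookkeeping (associativity of $\ell_\infty$-sums, commuting finite and countable sums) is routine. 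I would therefore present the proof as: cite Theorem~\ref{Mainresultofthepaper} for the two-sided complemented embedding, cite \cite[Theorem~3.1]{Kauf} for the $\ell_\infty$-sum self-similarity of both endpoints, and conclude by Pe\l czy\'nski decomposition.
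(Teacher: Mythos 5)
Your proposal is correct and follows exactly the paper's route: Theorem \ref{Mainresultofthepaper} with $\gamma=\kappa$ gives the two-sided complemented embeddings, \cite[Theorem 3.1]{Kauf} (dualized) gives the $\ell_\infty$-sum self-similarity, and Pe\l czy\'nski's decomposition method concludes. The paper only invokes the self-similarity for $\Lip(C(K))$, which already suffices for the decomposition argument, but your additional verification for $\Lip(c_0(\kappa))$ is harmless and equally justified by the same theorem.
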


%({\color{blue}(Comparar com o Teorema 7.13 do Benyamini Lindenstrauss: $C(K)\sim c_0(\Gamma)$ para algum $\Gamma\Leftrightarrow$ $K$ é scattered de ordem finita. Portanto, neste caso, $Lip_0(C(K))\sim \Lip(c_0(\Gamma))$. O que se pode dizer sobre peso e celularidade neste caso? Parece que temos aqui exemplos de espaços de Banach não Lipschitz-equivalentes cujos espaços $\Lip$ são isomorfos!)}

If $K$ is an Eberlein compact in $\mathcal{B}$ (we recall that a compact Hausdorff space is called an Eberlein compact if it is homeomorphic to a weakly compact subset of a Banach space), it follows from \cite[Theorem 4.2 and Remark (a) following it]{BenEle} that its cellurarity coincides with its weight and it is attained: there is in $K$ a cellular family of size $\Weight(K)$. Then Corollary \ref{cor1} applies and we deduce %({\color{blue} sabe-se se existem compactos Eberlein que não estão em $\mathcal{B}$?)} \textcolor{magenta}{Segundo o Piotr esse \'e um grande problema em aberto nessa \'area}
\begin{equation}\label{isoiso}\Lip(C(K))\sim \Lip(c_0(\Weight(K))). 
\end{equation}
The same conclusion holds for compact ordinal spaces $K$, which by \cite[Theorem 4.50]{Fabian} are not Eberlein when $K$ is uncountable,. 
%The same conclusion holds if $K$ is an compact ordinal space; although if $K$ is uncountable, it follows from \cite[Theorem 4.50]{Fabian} that $K$ is not an Eberlein compact. 
If $\beta\N$ denotes the Stone-\v{C}ech compactification of $\N$ and $\beta \N^*=\beta\N\setminus \N$, it is well known that $C(\beta\N^*)\cong \ell_\infty/c_0$, $\Weight(\beta \N^*)=\mathbf{c}$ and $\beta \N^*$ has a cellular family of cardinality of the continuum, $\mathbf{c}$. Therefore, $\Lip(\ell_\infty/c_0)\sim \Lip(C(\beta\N^*))\sim \Lip(c_0(\mathbf{c}))$.

The relation (\ref{isoiso}) may be also be valid for $C(K)$ spaces where $K$ admits no cellular family of cardinality $\Weight(K)$, as we will see next. According to \cite[Theorem 7.13]{BenLind}, if $K$ is a scattered space of finite Cantor-Bendixson height and $\Weight(K)=\kappa$, then the space $C(K)$ is Lipschitz equivalent to $c_0(\kappa)$. Consequently, $\F(C(K))\sim \F(c_0(\kappa))$. We deduce that (\ref{isoiso}) holds for all scattered compacta of finite height. There are, however, examples of such spaces failing to satisfy the hypotheses of Corollary \ref{cor1}. For example, the Stone space of a Boolean algebra generated by finite subsets of $\N$ and an uncountable almost disjoint collection of infinite subsets of $\N$, see \cite{Mrowka}. In the literature these spaces are known as $\varPsi$-space or Mr\'owka-Isbell spaces and constitute examples of scattered compacta of height $3$, which are separable but not metrizable, implying that $\Cell(K)<\Weight(K)$.

Recalling that $\ell_\infty\cong C(\beta\N)$ and $\beta\N$ is separable, non-scattered and non-metrizable, we conclude our study on this topic with the following result:

\begin{theorem}\label{mainellinfty}If $\mathbf{c}$ denotes the cardinality of the continuum, then
\[\Lip(\ell_\infty)\sim \Lip(c_0(\mathbf{c})).\] 
\end{theorem}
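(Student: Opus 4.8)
The plan is to reduce the statement to Corollary~\ref{cor1} by exhibiting $\ell_\infty$ (equivalently $C(\beta\N)$) as a member of the class $\mathcal{B}$ whose weight and cellularity both equal $\mathbf{c}$, and then invoking the corollary directly. The cellularity side is immediate: $\N$ itself is a cellular family of size $\aleph_0$ in $\beta\N$, but more to the point, since $\Weight(\ell_\infty)=\Dens(\ell_\infty)=\mathbf{c}$ and $\ell_\infty$ contains $\mathbf{c}$-many disjointly supported order intervals (or one may observe directly that $\beta\N$ carries a cellular family of size $\mathbf{c}$, e.g.\ from an almost disjoint family of subsets of $\N$ of size $\mathbf{c}$, whose closures in $\beta\N$ have pairwise disjoint interiors), we get $\Cell(\beta\N)=\Weight(\beta\N)=\mathbf{c}$. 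So the only real content is that $\ell_\infty\cong C(\beta\N)$ lies in $\mathcal{B}$, i.e.\ that $C(\beta\N)$ is isomorphic (as a Banach space) to $C(L)$ for some zero-dimensional compact Hausdorff $L$.

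For that, I would use the fact that $\beta\N$ is already zero-dimensional: it is the Stone space of the Boolean algebra $\mathcal{P}(\N)$, hence totally disconnected compact Hausdorff, so $C(\beta\N)$ is trivially of the form $C(L)$ with $L=\beta\N$ itself zero-dimensional. Thus $\beta\N\in\mathcal{B}$ witnessed by $L=\beta\N$, no nontrivial isomorphism is needed. (If one prefers the abstract route: $\ell_\infty$ is isometrically a $C(K)$ space with $K$ hyperstonean, and hyperstonean spaces are extremally disconnected, in particular zero-dimensional.) With $K=\beta\N\in\mathcal{B}$, $\Weight(K)=\mathbf{c}$, and $K$ admitting a cellular family of size $\mathbf{c}$, Corollary~\ref{cor1} applies verbatim and yields $\Lip(C(\beta\N))\sim\Lip(c_0(\mathbf{c}))$, that is, $\Lip(\ell_\infty)\sim\Lip(c_0(\mathbf{c}))$.

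I do not expect a serious obstacle here; the statement is essentially a sanity-check application of the machinery, and the discussion preceding it already notes that $\beta\N$ is separable, non-scattered and non-metrizable precisely to emphasize that this case is \emph{not} covered by the scattered-space results but \emph{is} covered by Corollary~\ref{cor1}. The one point requiring a word of care is confirming that $\beta\N$ genuinely carries a cellular family of full cardinality $\mathbf{c}$ (not merely $\aleph_0$): this follows from a standard almost-disjoint-family argument — take an almost disjoint family $\{A_\xi:\xi<\mathbf{c}\}$ of infinite subsets of $\N$ and note that the sets $\overline{A_\xi}\setminus\overline{\N\setminus A_\xi}$, or equivalently the clopen-minus-finite traces, form $\mathbf{c}$-many pairwise disjoint nonempty open subsets of $\beta\N$. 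Once this is in hand, the proof is a one-line citation of Corollary~\ref{cor1}.
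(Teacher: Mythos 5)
Your reduction to Corollary \ref{cor1} fails at the crucial step: $\beta\N$ does \emph{not} have a cellular family of cardinality $\mathbf{c}$. Since $\N$ is dense in $\beta\N$, the space is separable, and any family of pairwise disjoint nonempty open sets in a separable space is countable; so $\Cell(\beta\N)=\aleph_0<\mathbf{c}=\Weight(\beta\N)$. (The paper points this out explicitly: the sentence describing $\beta\N$ as ``separable, non-scattered and non-metrizable'' is there to signal that Theorem \ref{mainellinfty} is covered neither by the scattered-space discussion nor by Corollary \ref{cor1}, and needs its own proof.) Your almost-disjoint-family argument does not repair this: for a partition witness, $\overline{A_\xi}\setminus\overline{\N\setminus A_\xi}=\overline{A_\xi}$ is clopen and contains $A_\xi\subseteq\N$, and $\mathbf{c}$-many infinite subsets of $\N$ cannot be pairwise disjoint. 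The sets $\overline{A_\xi}\cap\beta\N^*$ \emph{are} pairwise disjoint, but they are open only in $\beta\N^*$, not in $\beta\N$; this is why the paper's example $\Lip(\ell_\infty/c_0)\sim\Lip(c_0(\mathbf{c}))$ does go through Corollary \ref{cor1} while $\ell_\infty$ does not. The obstruction is not merely technical: Corollary \ref{cor1} obtains $\Lip(c_0(\gamma))\complemented\Lip(C(K))$ from Rosenthal's theorem, which produces a \emph{linear} isometric copy of $c_0(\gamma)$ inside $C(K)$, and no isomorphic copy of $c_0(\mathbf{c})$ (or even of $c_0(\omega_1)$) exists in $\ell_\infty$, because $\ell_\infty^*$ is $w^*$-separable and $w^*$-separability of the dual passes to subspaces, while $\ell_1(\omega_1)$ is not $w^*$-separable.

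The half of your argument that does work is the easy inclusion: $\beta\N$ is zero-dimensional of weight $\mathbf{c}$, so Theorem \ref{Mainresultofthepaper} (or Proposition \ref{zerodim} plus Corollary \ref{applic1}) gives $\Lip(\ell_\infty)\complemented\Lip(c_0(\mathbf{c}))$. For the reverse inclusion the paper has to work nonlinearly: it takes an almost disjoint family $\{N_j : j\in\mathbf{c}\}$ and forms the closed span $E$ of $c$ together with the functions $\chi_{N_j}$ inside $\ell_\infty$; then $E\cong C(K_E)$ for a Mr\'owka--Isbell compactum $K_E$, which is scattered of finite height, so $E$ is \emph{Lipschitz equivalent} to $c_0(\mathbf{c})$ by \cite[Theorem 7.13]{BenLind} even though it is not linearly isomorphic to it. Since $c_0(\mathbf{c})$ is an absolute Lipschitz retract (Proposition \ref{Retract}), $E$ is a Lipschitz retract of $\ell_\infty$, whence $\F(c_0(\mathbf{c}))\sim\F(E)\complemented\F(\ell_\infty)$ and dually $\Lip(c_0(\mathbf{c}))\complemented\Lip(\ell_\infty)$; Pe\l czy\'nski decomposition then finishes. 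You would need to supply this (or an equivalent nonlinear embedding of $c_0(\mathbf{c})$ as a Lipschitz retract of $\ell_\infty$) to close the gap.
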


$\ell_\infty$ and $c_0(\mathbf{c})$ are readily seen to be nonisomorphic, since  $\ell_\infty^*$ is $w^*$-separable while $c_0(\mathbf{c})^*$ is not. Since $\beta \N$ is non-scattered, they are not even uniformly homeomorphic, see \cite[Theorem 6.3]{JohnLindSchet}. %\textcolor{magenta}{Eu inseri uma bibliografia aqui}({\color{blue}eu destacaria, nos outros exemplos acima, quais fornecem pares de espaços não isomorfos (não Lipschitz queivalentes, não uniformemente homeomorfos) com espaços Lip isomorfos. Se quiser, coloque em magenta e dou uma olhada depois })
\bigskip

%{\color{blue}(Eu tiraria o fim desta seção, inclusive a conjectura. as observações depois da question 1 acho que são suficientes, e não me arriscaria a colocar essa conjectura como uma conjectura - a menos que você tenha uma intuição muito forte de que seja verdade)} We do not have an example of compactum $K$ that fails relation \ref{isoiso}. Therefore, based on the investigations we conducted here, we would like to pose the following conjecture:

%\begin{conjecture}
%If $K$ is a Hausdorff compactum of $\Weight(K)=\kappa$, then
%\[\Lip(C(K))\sim \Lip(c_0(\kappa)).\]
%\end{conjecture}

\section{Auxiliary Results}
\label{Sec-Aux}

In this section we present some results needed to prove Theorems \ref{Mainresultofthepaper} and \ref{mainellinfty}. The first is a formula that relates the Lipschitz-free space of certain unions of metric spaces to the Lipschitz-free space of each component of that union, provided that a certain orthogonality condition is satisfied. For convenience of the reader, we include its simple proof.

\begin{proposition}\label{prop:orth}\emph{(\cite[Proposition 5.1]{Kauf}, \cite[Proposition 3.9]{Weaver})}. Let $(X,d)$ be a metric space with a distinguished point $0$ and let $(X_j)_{j\in \varGamma}$ be a family of subsets of X satisfying the following conditions: 
\begin{enumerate}
\item $X=\bigcup_{j\in\Gamma} X_j$;
\item $X_i\cap X_j = \{0\}$ for $i\neq j$;
\item \emph{(orthogonality)} there exists $C\geq 1$ such that, for all $i\neq j,\, x\in X_i$ and $y\in X_j$, $d(x,0)+d(y,0)\leq C\,d(x,y)$. 
\end{enumerate}
Then
\[\F \left(X\right) \sim \left(\bigoplus_{j\in \varGamma}\F (X_j)\right)_{\ell_1}.\]
\label{propbasic2}
\end{proposition}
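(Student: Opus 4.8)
The plan is to construct explicitly an isomorphism (in fact a bounded-below, norm-bounded linear bijection) between $\F(X)$ and $\left(\bigoplus_{j\in\varGamma}\F(X_j)\right)_{\ell_1}$, and it is cleanest to work on the dual side where the spaces $\Lip(X_j)$ sit. First I would note that each $X_j$ is a pointed metric subspace of $X$ sharing the base point $0$, so there are canonical isometric embeddings $\iota_j\colon\F(X_j)\to\F(X)$ induced by the inclusions, as well as norm-one restriction operators $\Lip(X)\to\Lip(X_j)$. The natural candidate map $T\colon\left(\bigoplus_{j\in\varGamma}\F(X_j)\right)_{\ell_1}\to\F(X)$ is $T\bigl((\mu_j)_j\bigr)=\sum_{j}\iota_j(\mu_j)$; this is well defined and has $\|T\|\le 1$ because the $\ell_1$-sum controls the series. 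The substance of the argument is to produce a bounded inverse, equivalently to show $\|T((\mu_j)_j)\|_{\F(X)}\ge c\sum_j\|\mu_j\|_{\F(X_j)}$ for some $c>0$.

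The key step is to build, for each finitely supported molecule, a single Lipschitz function on $X$ that simultaneously witnesses the norms on all the pieces, using the orthogonality hypothesis (3). Given $f_j\in\Lip(X_j)$ with $\|f_j\|_{\Lipp}\le 1$, I would define $f\colon X\to\R$ by $f|_{X_j}=f_j$ (consistent on the common point $0$, where every $f_j$ vanishes). For $x\in X_i$, $y\in X_j$ with $i\ne j$ one has $|f(x)-f(y)|\le |f_i(x)|+|f_j(y)|\le d(x,0)+d(y,0)\le C\,d(x,y)$, so $\|f\|_{\Lipp}\le C$; within a single $X_j$ the bound is $1\le C$. Testing $T((\mu_j)_j)$ against such an $f$ and choosing the $f_j$ to nearly norm each $\mu_j$ yields $\langle f,\sum_j\iota_j(\mu_j)\rangle=\sum_j\langle f_j,\mu_j\rangle$ which can be made arbitrarily close to $\sum_j\|\mu_j\|$; hence $\|T((\mu_j)_j)\|_{\F(X)}\ge \tfrac1C\sum_j\|\mu_j\|$. (One must be slightly careful that the suprema defining the $\Lip$ norm on the whole of $X$ are respected; the three cases above — two points in the same $X_j$, two points in different pieces, and the degenerate cases involving $0$ — cover everything, so the gluing is legitimate.)

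This establishes that $T$ is an into-isomorphism with constants $1$ and $C$; it remains to see $T$ is onto $\F(X)$. That follows because the range of $T$ is a closed subspace (being the image of a Banach space under a bounded-below operator) containing every evaluation functional $\delta_x$ for $x\in X$: each $x$ lies in some $X_j$ by (1), so $\delta_x=\iota_j(\delta_x)$ is in the range. Since the $\delta_x$ span a dense subspace of $\F(X)$, the closed range is all of $\F(X)$. Combining, $T$ is an isomorphism with $\|T\|\le 1$ and $\|T^{-1}\|\le C$, giving the claimed $\F(X)\sim\left(\bigoplus_{j\in\varGamma}\F(X_j)\right)_{\ell_1}$.

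The main obstacle, and the only place real care is needed, is the gluing estimate: one must verify that assembling the $f_j$ into a single function on $X$ does not create large Lipschitz quotients across different pieces, and this is exactly what hypothesis (3) is engineered to prevent — without it the conclusion is false. Everything else (the norm-one bound on $T$, density of evaluation functionals, surjectivity via closed range) is routine Lipschitz-free space bookkeeping.
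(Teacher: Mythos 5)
Your proposal is correct and is essentially the paper's argument viewed from the predual side: the paper defines the same gluing map $\Phi((f_j))|_{X_i}=f_i$ on $\left(\bigoplus_{j}\Lip(X_j)\right)_{\ell_\infty}$, proves the identical orthogonality estimate $|f(x)-f(y)|\le C\,d(x,y)$ for $x\in X_i$, $y\in X_j$, and concludes by checking that $\Phi$ is a surjective weak$^*$-continuous isomorphism, hence the adjoint of the desired isomorphism of free spaces. You instead build the operator $T((\mu_j))=\sum_j\iota_j(\mu_j)$ directly and use the glued functions only as norming functionals (plus density of the $\delta_x$ for surjectivity), which is just the dual formulation of the same proof with the same constants $1$ and $C$.
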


\begin{proof}Assuming that each $(X_j,d)$ is a metric space having $0$ as distinguished point, we consider the function $\Phi:\left(\bigoplus_{j\in\varGamma}\Lip(X_j)\right)_{\ell_\infty}\to \Lip(X)$ given by 
$\Phi((f_j))(x)=f_i(x)$ if $x\in X_i$. For each $x\in X_r$ and $y\in X_s$ with $r\neq s$ we have
\begin{align*}
|\Phi((f_j))(x)-\Phi((f_j))(y)| &=|f_r(x)-f_s(y)|\leq \|f_r\|d(x,0) +\|f_s\|d(y,0)\\
& \leq C\max\{\|f_r\|,\|f_s\|\}d(x,y)
\leq C\|(f_j)\|d(x,y). 
\end{align*}
We deduce that $\Phi$ is a well defined bounded linear operator, easily seen to be also surjective. Recalling that on bounded sets of any $\Lip(X)$ space the weak$^*$ topology coincides with the topology of pointwise convergence, it is readily seen that $\Phi$ is weak$^*$ continuous. Thus, $\Phi$ is the adjoint of an isomorphism from $\F(X)$ onto $\left(\bigoplus_{j\in\varGamma}\F(X_j)\right)_{\ell_1}$. \end{proof}

We will also need the following strengthening of  \cite[Lemma 2]{DutFec}.

\begin{proposition}\label{Prop1}
Let $(Y_j)_{j\in \varGamma}$ be a family of subspaces of a Banach space $X$ and  $(P_j)_{j\in \varGamma}$, $P_j:X\to Y_j$, be a family of projections such that
\begin{enumerate}
\item $P_j[X]=Y_j$ for all $j$;
\item $P_j\circ P_i(x)=0$ for all $x$ whenever $i\neq j$;
\item the formula $x\mapsto (P_j(x))_{j\in \varGamma}$ defines a bounded injective linear ope\-ra\-tor from $X$ to $\left(\bigoplus_{j\in \varGamma} Y_j\right)_{c_0}$.
\end{enumerate} 
Then, 
\[\left(\bigoplus_{j\in \varGamma}\mathcal{F}(Y_j)\right)_{\ell_1}\stackrel{c}{\hookrightarrow}\mathcal{F}(X).\]
\end{proposition}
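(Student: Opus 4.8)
The plan is to identify $\left(\bigoplus_{j\in\varGamma}\mathcal{F}(Y_j)\right)_{\ell_1}$ with the Lipschitz‑free space of an explicit subset $U\subseteq X$, and then to show that $U$ is a Lipschitz retract of $X$. The conclusion will then follow from functoriality of $\mathcal{F}$: a Lipschitz retraction $\varrho\colon X\to U$ induces a bounded operator $\widehat{\varrho}\colon\mathcal{F}(X)\to\mathcal{F}(U)$ that is a left inverse of the canonical isometric embedding $\mathcal{F}(U)\hookrightarrow\mathcal{F}(X)$, so $\mathcal{F}(U)$ is complemented in $\mathcal{F}(X)$.

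First I would set $U:=\bigcup_{j\in\varGamma}Y_j$ and $Z:=\left(\bigoplus_{j\in\varGamma}Y_j\right)_{c_0}$, and let $J\colon X\to Z$ be the bounded linear operator $J(x)=(P_j(x))_{j\in\varGamma}$ furnished by hypothesis (3). Using (1) and (2) one checks that for $k\in\varGamma$ and $y\in Y_k$ one has $P_k(y)=y$ and $P_j(y)=0$ for $j\neq k$; hence $J$ maps $Y_k$ isometrically onto the $k$‑th coordinate subspace $\widetilde Y_k$ of $Z$, and therefore $J$ restricts to a bijection of $U$ onto $W:=\bigcup_{j\in\varGamma}\widetilde Y_j$, the union of coordinate subspaces of $Z$. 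Since $\|\widetilde y-\widetilde y'\|_Z=\max\{\|y\|,\|y'\|\}$ whenever $y\in Y_k$, $y'\in Y_{k'}$ with $k\neq k'$, the inverse $(J|_U)^{-1}\colon W\to U$ is $2$‑Lipschitz, so $J|_U$ is a bi‑Lipschitz bijection and $\widehat{J|_U}\colon\mathcal{F}(U)\to\mathcal{F}(W)$ is an isomorphism. On the other hand the family $(\widetilde Y_j)_{j}$ inside $W$ satisfies the hypotheses of Proposition \ref{prop:orth}: it covers $W$ by subspaces meeting pairwise only at $0$, and the orthogonality condition holds with constant $C=2$ because $Z$ is a $c_0$‑sum. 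Hence $\mathcal{F}(W)\sim\left(\bigoplus_j\mathcal{F}(\widetilde Y_j)\right)_{\ell_1}=\left(\bigoplus_j\mathcal{F}(Y_j)\right)_{\ell_1}$, and composing with $\widehat{J|_U}$ gives $\mathcal{F}(U)\sim\left(\bigoplus_j\mathcal{F}(Y_j)\right)_{\ell_1}$.

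It then remains to produce a Lipschitz retraction $X\to U$. I would obtain it by first constructing a Lipschitz retraction $r\colon Z\to W$ and then setting $\varrho:=(J|_U)^{-1}\circ r\circ J\colon X\to U$, which is Lipschitz as a composition of Lipschitz maps and satisfies $\varrho|_U=\mathrm{id}_U$ because $r$ fixes $W$ pointwise and $J(U)=W$. For $r$ itself, the naive assignment sending $z$ to its dominant coordinate is discontinuous at points with tied maxima; instead I would use
\[r(z)_j:=z_j-R_{a_j(z)}(z_j),\qquad a_j(z):=\sup_{i\neq j}\|z_i\|,\]
where $R_a$ denotes the radial retraction of $Y_j$ onto its closed ball of radius $a$. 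One checks that $r(z)_j\neq 0$ only for the unique index $j$ (if any) at which $\|z_j\|$ strictly dominates, so $r$ does take values in $W$; that $r$ fixes each $\widetilde Y_j$ pointwise; and that $r$ is Lipschitz with an absolute constant, using that a radial retraction onto a ball is $2$‑Lipschitz in its argument and $1$‑Lipschitz in the radius, together with the fact that $z\mapsto a_j(z)$ is $1$‑Lipschitz. Since $\mathcal{F}(U)\hookrightarrow\mathcal{F}(X)$ is isometric and $\widehat{\varrho}\colon\mathcal{F}(X)\to\mathcal{F}(U)$ is a bounded left inverse of it, $\mathcal{F}(U)\sim\left(\bigoplus_j\mathcal{F}(Y_j)\right)_{\ell_1}$ is complemented in $\mathcal{F}(X)$, which is the assertion.

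The step I expect to require genuine work, rather than routine bookkeeping, is the construction and Lipschitz estimate for $r\colon Z\to W$: one must circumvent the discontinuity of any "largest‑coordinate" selection and verify that the radial‑retraction formula above is uniformly Lipschitz on the whole $c_0$‑sum. Everything else — reading off the action of the $P_j$ on the $Y_k$ from (1)–(2), the bi‑Lipschitz estimates for $J|_U$, the application of Proposition \ref{prop:orth}, and the passage from the retraction to the complemented embedding via functoriality of $\mathcal{F}$ — should be standard.
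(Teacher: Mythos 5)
Your proposal is correct and follows essentially the same route as the paper: retract $X$ onto $\bigcup_j Y_j$ by keeping only the dominant coordinate shrunk by the size of the runner-up, then apply \cite[Lemma 3]{DutFec} together with the orthogonal $\ell_1$-decomposition of Proposition \ref{prop:orth}. Your retraction $\varrho=(J|_U)^{-1}\circ r\circ J$ is in fact the very same map $R$ the paper writes down directly on $X$; you merely organize the Lipschitz estimate more cleanly by factoring through the $c_0$-sum and the standard properties of radial retractions.
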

\begin{proof}
From condition (3), for each $x\in X$ we have $(P_j(x))_{j\in \varGamma}\in \left(\bigoplus_{j\in \varGamma} Y_j\right)_{c_0}$. Moreover, $x=0$ if and only if $P_j(x)=0$ for all $j\in \varGamma$. Therefore, for each $x\in X\setminus\{0\}$ there is $\beta(x)\in \varGamma$ such that $\|P_{\beta(x)}\|=\max_{i\in \varGamma}\|P_i(x)\|>0$. Write $Y=\bigcup_{i\in \varGamma} Y_j$ and define $R:X\to Y$ by 
\begin{displaymath}
R(x)=\left\{
\begin{array}{ll}
0& \text{ if }x=0\\
\left(1-\max_{i\neq \beta(x)}\left(\cfrac{\|P_i(x)\|}{\|P_{\beta(x)}(x)\|}\right)\right)P_{\beta(x)}(x)&\text{ if }x\neq 0.
\end{array} \right.
\end{displaymath}
If for some $j\in \varGamma$ and $x\in X$ we have $j\neq \beta(x)$ and $\|P_j(x)\|=\|P_{\beta(x)}(x)\|$, then $\max_{i\neq \beta(x)}\|P_i(x)\|=\|P_{\beta(x)}(x)\|$ and $R(x)=0$. We deduce that $R$ is well defined. Furthermore, from condition (2), if $x\neq 0$ then $\beta(R(x))=\beta(x)$ and $\max_{i\neq \beta(R(x))}\|P_i(R(x))\|=0$. It follows that $R(R(x))=R(x)$ for all $x\in X$. %Therefore, $R$ is a retraction. 

Let us verify that $R$ is Lipschitz, thus a Lipschitz retraction. In effect, let $x$ and $y$ be arbitrary elements of $X$. If $x\neq 0$ and $y=0$, then
%\footnotesize
\begin{align*}
 \|R(x)-R(y)\|&=\left\|\left(1-\max_{i\neq \beta(x)}\left(\cfrac{\|P_i(x)\|}{\|P_{\beta(x)}(x)\|}\right)\right)P_{\beta(x)}(x)\right\|\\
 &=\left\|\max_{i\neq \beta(x)}\{P_{\beta(x)}(x)-P_i(x)\}\right\|\leq  2\sup_{j\in \varGamma}\|P_j\|\|x-y\|.
\end{align*}
%\normalsize

If $x\neq 0$ and $y \neq 0$ let us fix $j=\beta(x)$ and $r=\beta(y)$. If $j\neq r$,
%\footnotesize
\begin{align*}
\|R(x)-R(y)\|&=\left\|\left(1-\max_{i\neq j}\left(\frac{\|P_i(x)\|}{\|P_j(x)\|}\right)\right)P_j(x)-\left(1-\max_{i\neq r}\left(\frac{\|P_i(y)\|}{\|P_r(y)\|}\right)\right)P_r(y)\right\|\\
&=\left\|\left(\|P_j(x)\|-\max_{i\neq j}\|P_i(x)\|\right)\frac{P_j(x)}{\|P_j(x)\|}+\left(\|P_r(y)\|-\max_{i\neq r}\|P_i(y)\|\right)\frac{P_r(y)}{\|P_r(y)\|}\right\|\\
&\leq \left|\|P_j(x)\|-\max_{i\neq j}\|P_i(x)\|\right|+\left|\|P_r(y)\|-\max_{i\neq r}\|P_i(y)\|\right|\\
&\leq \left(\|P_j(x)\|-\|P_j(y)\|\right)+\left(\|P_r(y)\|-\|P_r(x)\|)\right)\\
&\leq \left(\|P_j(x-y)\|+\|P_r(x-y)\|\right)\leq 2\sup_{j\in \varGamma}\|P_j\|\|x-y\|.
\end{align*}
%\normalsize
If $j=r$, without loss of generality we may assume that $\max_{i\neq j}\|P_i(x)\|\geq \max_{i\neq j}\|P_i(y)\|$. Let $s\in \varGamma$ be such that $\|P_s(x)\|=\max_{i\neq j}\|P_i(x)\|$. Then $\max_{i\neq j}\|P_i(y)\|\geq \|P_s(y)\|$, and 
\[|\max_{i\neq j}\|P_i(x)\|-\max_{i\neq j}\|P_i(y)\||\leq |\|P_s(x)\|-\|P_s(y)\||\leq \|P_s(x)-P_s(y)\|.\]
Therefore,
%\footnotesize
\begin{align*}
&\|R(x)-R(y)\|=\left\|\left(1-\max_{i\neq j}(\frac{\|P_i(x)\|}{\|P_j(x)\|})\right)P_j(x)-\left(1-\max_{i\neq j}(\frac{\|P_i(y)\|}{\|P_j(y)\|})\right)P_j(y)\right\|\\
&\leq \left(1-\max_{i\neq j}(\frac{\|P_i(x)\|}{\|P_j(x)\|})\right)\|P_j(x)-P_j(y)\|+\left\|\left(\max_{i\neq j}(\frac{\|P_i(y)\|}{\|P_j(y)\|})-\max_{i\neq j}(\frac{\|P_i(x)\|}{\|P_j(x)\|})\right)P_j(y)\right\|\\
&\leq \|P_j(x)-P_j(y)\|\\
&+\left\|\frac{\max_{i\neq j}\|P_i(x)\|}{\|P_j(x)\|} - \frac{\max_{i\neq j}\|P_i(x)\|}{\|P_j(y)\|}+ \left(\frac{\max_{i\neq j}\|P_i(x)\|  -   \max_{i\neq j}\|P_i(y)\|}{\|P_j(y)\|}\right)\right\|\|P_j(y)\|\\
&\leq \|P_j(x)-P_j(y)\| + \left(\frac{\max_{i\neq j}\|P_i(x)\|}{\|P_j(x)\|}\right)|\|P_j(x)\|-\|P_j(y)\||+|\max_{i\neq j}\|P_i(x)\|-\max_{i\neq j}\|P_i(y)\||\\
&\leq 2\|P_j(x)-P_j(y)\|+\|P_s(x)-P_s(y)\|\leq 3\sup_{j\in \varGamma}\|P_j\| \|x-y\|.
\end{align*}
%\normalsize
From condition (3) and the Banach-Steinhaus theorem we may fix $A=\sup_{j\in \varGamma}\|P_j\|<\infty$ and deduce that for all $x,y \in X$,
\[\|R(x)-R(y)\|\leq 3A\|x-y\|.\]

Since is a Lipschitz retraction, from \cite[Lemma 3]{DutFec} it follows that
\[\F(\bigcup_{i\in \varGamma} Y_j)\stackrel{c}{\hookrightarrow}\mathcal{F}(X).\]
From conditions (2) and (3), there is $C>0$ such that $\max\{\|x_i\|,\|x_j\|\}\leq C\|x_i-x_j\|$ whenever $x_i\in Y_i$, $x_j\in Y_j$, thus Proposition \ref{propbasic2} applies, which means that
\[\mathcal{F}(Y)\sim \left(\bigoplus_{j\in \varGamma}\mathcal{F}(Y_j)\right)_{\ell_1},\]
and we are done. \end{proof}

%As an application of the previous result we obtain

\begin{corollary}\label{applic1}
If $\varGamma$ is an infinite set then
\[\left(\bigoplus_{j\in \varGamma}\mathcal{F}(c_0)\right)_{\ell_1}\stackrel{c}{\hookrightarrow}\mathcal{F}\left(c_0(\varGamma)\right).\]
Consequently,
\[\left(\bigoplus_{j\in \varGamma}\Lip(c_0)\right)_{\ell_\infty}\stackrel{c}{\hookrightarrow}\Lip\left(c_0(\varGamma)\right).\]
\end{corollary}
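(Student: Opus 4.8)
The plan is to deduce Corollary \ref{applic1} directly from Proposition \ref{Prop1} by exhibiting, for the space $X=c_0(\varGamma)$, a family of subspaces $(Y_j)_{j\in\varGamma}$ together with projections $(P_j)_{j\in\varGamma}$ satisfying conditions (1)--(3). First I would choose a partition of $\varGamma$ into countably infinite blocks $\varGamma=\bigsqcup_{j\in\varGamma}\varGamma_j$ (here I use that $|\varGamma|=|\varGamma\times\N|$ for infinite $\varGamma$), set $Y_j=\{x\in c_0(\varGamma)\setsep \operatorname{supp}(x)\subseteq\varGamma_j\}$, which is isometric to $c_0$, and let $P_j:c_0(\varGamma)\to Y_j$ be the canonical coordinate projection (restriction to $\varGamma_j$, zero elsewhere). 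These are norm-one projections, so $\sup_j\|P_j\|=1$.

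Next I would verify the three hypotheses. Condition (1) is immediate since $P_j$ is onto $Y_j$. Condition (2) holds because for $i\neq j$ the blocks $\varGamma_i,\varGamma_j$ are disjoint, so $P_j\circ P_i=0$. For condition (3), given $x\in c_0(\varGamma)$, the vector $(P_j(x))_{j\in\varGamma}$ has $\|P_j(x)\|_\infty\to 0$ along $\varGamma$ (because for any $\epsilon>0$ only finitely many coordinates of $x$ exceed $\epsilon$ in absolute value, hence only finitely many blocks $\varGamma_j$ carry such a coordinate), so indeed $(P_j(x))_j\in\left(\bigoplus_{j\in\varGamma}Y_j\right)_{c_0}$; the map $x\mapsto(P_j(x))_j$ is clearly linear and bounded (in fact isometric onto its range), and it is injective since $P_j(x)=0$ for all $j$ forces $x=0$. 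Proposition \ref{Prop1} then yields $\left(\bigoplus_{j\in\varGamma}\mathcal{F}(Y_j)\right)_{\ell_1}\stackrel{c}{\hookrightarrow}\mathcal{F}(c_0(\varGamma))$, and since each $Y_j\cong c_0$ this is precisely the first displayed conclusion.

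For the ``consequently'' part I would simply dualize. A complemented embedding $E\stackrel{c}{\hookrightarrow}F$ of Banach spaces passes to the adjoints as a complemented embedding $F^*\stackrel{c}{\hookrightarrow}E^*$ (the adjoint of the embedding composed suitably with the adjoint of the projection gives the required maps). Applying this to $E=\left(\bigoplus_{j\in\varGamma}\mathcal{F}(c_0)\right)_{\ell_1}$ and $F=\mathcal{F}(c_0(\varGamma))$, and using that $F^*=\Lip(c_0(\varGamma))$ by definition of the Lipschitz-free space, together with the standard duality $\left(\bigoplus_{j\in\varGamma}\mathcal{F}(c_0)\right)_{\ell_1}^*\cong\left(\bigoplus_{j\in\varGamma}\mathcal{F}(c_0)^*\right)_{\ell_\infty}=\left(\bigoplus_{j\in\varGamma}\Lip(c_0)\right)_{\ell_\infty}$, gives exactly the second display.

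I do not anticipate a serious obstacle here: the only point requiring a little care is the cardinal arithmetic ensuring $\varGamma$ can be partitioned into $|\varGamma|$ countably infinite pieces, and the verification that the block-projection vector lands in the $c_0$-sum rather than just the $\ell_\infty$-sum — both are routine. The remaining work is the mechanical duality bookkeeping in the last paragraph, which is standard and which I would state without belaboring.
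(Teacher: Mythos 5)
Your proof is correct and is essentially the paper's argument: the paper applies Proposition \ref{Prop1} to $X=\left(\bigoplus_{j\in\varGamma}c_0\right)_{c_0}$ with the canonical coordinate projections and then invokes the isometry $c_0(\varGamma)\cong\left(\bigoplus_{j\in\varGamma}c_0\right)_{c_0}$, which is exactly what your partition of $\varGamma$ into $|\varGamma|$ countably infinite blocks realizes concretely, and the second display is likewise obtained by duality. No gap.
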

\begin{proof}
Putting $X= \left(\bigoplus_{j\in \varGamma}c_0\right)_{c_0}$ and $Y_j=c_0$ for each $j\in \varGamma$, by Proposition \ref{Prop1} we have  that
\[\left(\bigoplus_{j\in \varGamma}\F(c_0)\right)_{\ell_1}\stackrel{c}{\hookrightarrow}\F((\bigoplus_{j\in \varGamma}c_0)_{c_0}).\]
Since $\varGamma$ is infinite, then $c_0(\varGamma)\cong \left(\bigoplus_{j\in \varGamma}c_0\right)_{c_0}$ whence follows the first relation.
The second relation follows from the first by duality. 
\end{proof}

\begin{remark}From Proposition \ref{Prop1} we may also obtain other relations on Lipschitz free spaces of infinite sums of Banach spaces, as follows. For each $p\in [1,\infty)$ and a family of Banach spaces $(Y_i)_{i\in \varGamma}$ the following relation holds:
\[\left(\bigoplus_{j\in \varGamma}\mathcal{F}(Y_j)\right)_{\ell_1}\stackrel{c}{\hookrightarrow}\mathcal{F}((\bigoplus_{j\in \varGamma}Y_j)_{\ell_p}).\]
%{\color{blue}(não dá problema com a condição 3 da proposição? a soma lá é $c_0$)} 
We may deduce for example that for each infinite set $\varGamma$ and for each $p\in [1,\infty)$,
\[\left(\bigoplus_{j\in \varGamma}\F(\ell_p)\right)_{\ell_1}\stackrel{c}{\hookrightarrow}\F(\ell_p(\varGamma)).\]
\end{remark}

%{\color{blue}Pergunta: e para $0<p<1$? Podemos deixar como questão aberta}\textcolor{magenta}{Eu n\~ao colocaria pois pode ser trivial. Ter\'imos que estudar mais para ver se n\~ao \'e \'obvio. Sen\~ao no futuro vamos parecer que fomos pregui\c cosos}

\begin{lemma}\label{Lemma2}Let $(Y_j)_{j\in \varGamma}$ be a family of subsets of a pointed metric space $(X,d,0)$, 
$(R_j)_{j\in \varGamma}$ be a family of Lipschitz maps, $R_j:X\to Y_j$, fixing the origin $0$, such that $\sup_{j\in \varGamma}\|R_j\|_{\Lipp}=M<\infty$, and let $\mathcal{U}$ be an ultrafilter on $\varGamma$, such that $\displaystyle{\lim_{\mathcal{U}}R_j(x)=x}$ for all $x\in X$. Then,
\[\Lip(X)\stackrel{c}{\hookrightarrow} \left(\bigoplus_{j\in \varGamma}\Lip(Y_j)\right)_{\ell_\infty}.\]
\end{lemma}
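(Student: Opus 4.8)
The natural dual statement to aim for is a complemented embedding at the predual level, namely that $\F(X)$ is a complemented subspace of $\left(\bigoplus_{j\in\varGamma}\F(Y_j)\right)_{\ell_1}$, and then to take adjoints. Each Lipschitz map $R_j\colon X\to Y_j$ fixing $0$ induces a bounded linear operator $\widehat{R_j}\colon\F(X)\to\F(Y_j)$ with $\|\widehat{R_j}\|\le M$; dually, each inclusion $\iota_j\colon Y_j\hookrightarrow X$ induces an isometric embedding $\widehat{\iota_j}\colon\F(Y_j)\to\F(X)$. The plan is to build an operator $S\colon\F(X)\to\left(\bigoplus_{j}\F(Y_j)\right)_{\ell_1}$ from the $\widehat{R_j}$'s, and a projection $Q$ going back, in such a way that $Q\circ S=\Id_{\F(X)}$.

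\textbf{Key steps.} First I would observe that $S$ cannot literally be $\mu\mapsto(\widehat{R_j}\mu)_j$, because the resulting family need not be $\ell_1$-summable. The correct approach is dual: work directly on $\Lip(X)$. Define $\Psi\colon\left(\bigoplus_j\Lip(Y_j)\right)_{\ell_\infty}\to\Lip(X)$ by $\Psi((g_j)_j)=\lim_{\U}\, g_j\circ R_j$, where the limit is taken pointwise on $X$ (each $g_j\circ R_j$ is Lipschitz with constant at most $M\|g_j\|_{\Lipp}$, and the pointwise $\U$-limit of a bounded-in-Lipschitz-norm net is again Lipschitz with the same bound, vanishing at $0$). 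Thus $\Psi$ is bounded linear with $\|\Psi\|\le M$. Second, I would check $\Psi$ is weak$^*$-to-weak$^*$ continuous: on bounded sets the weak$^*$ topology of any $\Lip$-space is the topology of pointwise convergence, and for fixed $x\in X$ the functional $(g_j)_j\mapsto\Psi((g_j)_j)(x)=\lim_{\U}g_j(R_j(x))$ is weak$^*$-continuous on bounded sets because $g_j\mapsto g_j(R_j(x))$ is weak$^*$-continuous on the $j$-th coordinate, uniformly enough to pass the $\U$-limit (this is where one uses that $\left(\bigoplus_j\Lip(Y_j)\right)_{\ell_\infty}$ is the dual of $\left(\bigoplus_j\F(Y_j)\right)_{\ell_1}$ and that evaluation-type functionals are weak$^*$-continuous). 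Hence $\Psi=\Theta^*$ for a bounded operator $\Theta\colon\F(X)\to\left(\bigoplus_j\F(Y_j)\right)_{\ell_1}$. Third, I would produce a right inverse of $\Psi$: the map $\Lambda\colon\Lip(X)\to\left(\bigoplus_j\Lip(Y_j)\right)_{\ell_\infty}$ sending $f$ to $(f|_{Y_j})_j$ (restrictions), which is a bounded linear map with $\|\Lambda\|\le 1$ and is also weak$^*$-continuous (restriction is the adjoint of $\bigoplus\widehat{\iota_j}$, roughly, or directly pointwise-continuous on bounded sets). Then $\Psi\circ\Lambda(f)(x)=\lim_{\U}f(R_j(x))=f(\lim_{\U}R_j(x))=f(x)$, using continuity of $f$ and the hypothesis $\lim_{\U}R_j(x)=x$. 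So $\Psi\circ\Lambda=\Id_{\Lip(X)}$, meaning $\Lambda\circ\Psi$ is a bounded projection of $\left(\bigoplus_j\Lip(Y_j)\right)_{\ell_\infty}$ onto a subspace isomorphic (indeed isometric, since $\|\Lambda\|\le1$ and $\|\Psi\circ\Lambda\|=1$ forces $\Lambda$ to be an isometry onto its range) to $\Lip(X)$. This gives exactly $\Lip(X)\complemented\left(\bigoplus_{j\in\varGamma}\Lip(Y_j)\right)_{\ell_\infty}$.

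\textbf{Main obstacle.} The delicate point is the weak$^*$-continuity of $\Psi$: one must justify interchanging the ultrafilter limit with the weak$^*$ evaluation, i.e. that for a bounded net $((g_j^{(\alpha)})_j)_\alpha$ converging weak$^*$ to $(g_j)_j$, one has $\lim_\alpha\lim_{\U}g_j^{(\alpha)}(R_j(x))=\lim_{\U}g_j(R_j(x))$ for each fixed $x$. This is not automatic from coordinatewise weak$^*$-convergence alone, but it does follow because weak$^*$-convergence in $\left(\bigoplus_j\Lip(Y_j)\right)_{\ell_\infty}=\left(\bigoplus_j\F(Y_j)\right)_{\ell_1}^*$ means testing against every element of the $\ell_1$-sum of preduals, and for fixed $x$ the element $(\delta_{R_j(x)})_j$ — suitably rescaled — together with the fact that $j\mapsto\delta_{R_j(x)}/\!\sim$ can be aggregated through $\U$ gives precisely the functional computing $\lim_\U g_j(R_j(x))$; concretely, I expect to realize $\mu\mapsto\Psi(\,\cdot\,)(x)$ as the weak$^*$-limit along $\U$ of the coordinate evaluations, which is weak$^*$-continuous as a pointwise limit of weak$^*$-continuous functionals on a bounded set (Banach–Steinhaus / equicontinuity on bounded sets makes the limit continuous there, and that suffices to conclude $\Psi$ is an adjoint by the Banach–Dieudonné / Krein–Šmulian theorem). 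Once this is in place the rest is routine bookkeeping with operator norms.
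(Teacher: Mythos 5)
Your core argument is exactly the paper's: the restriction operator $\Lambda(f)=(f|_{Y_j})_j$ and the ultralimit operator $\Psi((g_j)_j)=\lim_{\U}g_j\circ R_j$ satisfy $\Psi\circ\Lambda=\Id_{\Lip(X)}$, so $\Lambda\circ\Psi$ is the desired projection; this is correct and complete for the statement as given. The entire discussion of weak$^*$-continuity and descending to the preduals is unnecessary here (the lemma asserts nothing at the level of $\F(X)$), and your parenthetical claim that $\Lambda$ is an isometry does not follow from $\|\Lambda\|\le 1$ and $\|\Psi\circ\Lambda\|=1$ (one only gets $\|f\|/M\le\|\Lambda f\|\le\|f\|$), but neither point affects the validity of the proof.
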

\begin{proof}
The proof follows the same lines of \cite[Key Lemma]{CCD}. Consider an operator 
$T:\Lip(X)\to \left(\bigoplus_{j\in \varGamma}\Lip(Y_j)\right)_{\ell_{\infty}}$ given by
\[T(f)=(f|_{Y_j})_{j\in \varGamma},\]
where $f|_{Y_j}$ denotes the restriction to $Y_j$. It is evident that $T$ is a well defined linear operator with 
$\|T\|\leq 1$.

On the other hand, consider the map 
$S:\left(\bigoplus_{j\in \varGamma}\Lip(Y_j)\right)_{\ell_{\infty}}\to \Lip(X)$ defined by the formula
\[S((f_j)_{j\in \varGamma})(x)=\lim_{\mathcal{U}} f_j(R_j(x)).\]
Since, for each $x\in X$, $\sup_{j\in \varGamma}|f_j(R_j(x))|\leq M\|(f_j)_{j\in \varGamma}\|_{\ell_{\infty}}d(x,0)$, the previous 
limit always exists. Moreover
\small
\begin{align*}
|S((f_j)_{j\in \varGamma})(x) -  S((f_j)_{j \in \varGamma})(y)|&=|\lim_{\mathcal{U}} f_j(R_j(x))-\lim_{\mathcal{U}} f_j(R_j(y))|\\
&=|\lim_{\mathcal{U}}(f_j(R_j(x))-f_j(R_j(y)))|\\
&=\lim_{\mathcal{U}}\|(f_j)_{j\in \varGamma}\|_{\ell_{\infty}}d(R_j(x),R_j(y))\\
&\leq M\|(f_j)_{j\in \varGamma}\|_{\ell_{\infty}}d(x,y).
\end{align*}
\normalsize
We deduce that $S$ is bounded linear operator with $\|S\|\leq M$. Moreover, for every $f\in \Lip(X)$ and $x\in X$ we have
\[S(T(f))(x) = S((f|_{Y_j})_{j\in \varGamma}) = \lim_{\mathcal{U}}f|_{Y_j}(R_j(x))=\lim_{\mathcal{U}}f(R_j(x)) = f(x).\]
Therefore, $S\circ T=Id_{\Lip(X)}$ is the identity whence $P=T\circ S$ is a projection onto an isomorphic copy of $\Lip(X)$ in $\left(\bigoplus_{j\in \varGamma}\Lip(Y_j)\right)_{\ell_{\infty}}$. The conclusion follows.
%We deduce
%\[\Lip(X)\complemented \left(\bigoplus_{j\in \varGamma}\Lip(Y_j)\right)_{\ell_\infty}.\]

\end{proof}

\begin{proposition}\label{zerodim}
If $K$ is an infinite Hausdorff zero-dimensional compactum of weight $\Weight(K)$, then
\[\Lip(C(K))\stackrel{c}{\hookrightarrow} \left(\bigoplus_{\Weight(K)}\Lip(c_0)\right)_{\ell_{\infty}}\]
\end{proposition}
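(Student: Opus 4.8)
The plan is to apply Lemma \ref{Lemma2} with $X=C(K)$ and with the family $(Y_j)_{j\in\varGamma}$ taken to be a suitable family of subspaces of $C(K)$, each isomorphic (even isometric) to $c_0$, indexed by a set $\varGamma$ of cardinality $\Weight(K)$, together with an ultrafilter $\mathcal{U}$ on $\varGamma$ and a uniformly bounded family of Lipschitz (in fact linear norm-one) retractions $R_j\colon C(K)\to Y_j$ converging pointwise along $\mathcal{U}$ to the identity. Since a zero-dimensional compactum has a base of clopen sets, one can fix a family $(U_j)_{j\in\varGamma}$ of nonempty clopen subsets of $K$ with $\varGamma$ of cardinality $\Weight(K)$ which separates points of $K$ (e.g.\ the clopen algebra has a generating set of this size), and to each finite subset we associate the finite-dimensional subspace of $C(K)$ spanned by the indicator functions of the atoms it generates; organizing these finite sets into a net directed by inclusion, the corresponding restriction-type maps converge pointwise to the identity on $C(K)$, because continuous functions on a zero-dimensional compactum are uniformly approximable by clopen-simple functions. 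Passing to an ultrafilter refining this net turns pointwise net-convergence into the hypothesis $\lim_{\mathcal U}R_j(x)=x$ of Lemma \ref{Lemma2}.

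Concretely, I would index by $\varGamma$ the collection of finite clopen partitions of $K$ (or of finite subalgebras of a fixed generating clopen algebra of size $\Weight(K)$), noting this collection has cardinality $\Weight(K)$; for $j\in\varGamma$ corresponding to a partition into clopen pieces, let $Y_j\subseteq C(K)$ be the span of the indicators of those pieces and let $R_j\colon C(K)\to Y_j$ be the conditional-expectation-type operator sending $f$ to the function constant on each piece equal to the value of $f$ at a chosen point of that piece (or the average — any norm-one linear projection onto $Y_j$ fixing $Y_j$ will do). Each $Y_j$ is finite-dimensional with a $1$-unconditional monotone basis of indicators, hence isometric to $\ell_\infty^{n_j}$; but what enters Lemma \ref{Lemma2} is only that $\Lip(Y_j)\hookrightarrow\Lip(\ell_\infty^{n_j})\hookrightarrow\Lip(c_0)$ complementably and uniformly — so I would instead take $Y_j$ to be a fixed isometric copy of $c_0$ inside which all these finite-dimensional pieces sit, or absorb the finite-dimensional-to-$c_0$ step afterwards. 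The cleanest route: apply Lemma \ref{Lemma2} verbatim to get $\Lip(C(K))\stackrel{c}{\hookrightarrow}\left(\bigoplus_{\varGamma}\Lip(Y_j)\right)_{\ell_\infty}$, then observe each $\Lip(Y_j)$ is norm-one complemented in $\Lip(c_0)$ (as $Y_j$ is isometric to a subspace $\ell_\infty^{n_j}$ of $c_0$ that is the range of a norm-one linear projection, a linear projection onto a subspace induces a complementation of the associated $\Lip$ spaces), so $\left(\bigoplus_{\varGamma}\Lip(Y_j)\right)_{\ell_\infty}\stackrel{c}{\hookrightarrow}\left(\bigoplus_{\varGamma}\Lip(c_0)\right)_{\ell_\infty}$, and finally $|\varGamma|=\Weight(K)$ gives the stated inclusion into $\left(\bigoplus_{\Weight(K)}\Lip(c_0)\right)_{\ell_\infty}$.

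The main obstacle I anticipate is verifying the pointwise-convergence hypothesis $\lim_{\mathcal U}R_j(x)=x$ with the correct quantifiers: one must produce a single index set $\varGamma$ and a single ultrafilter $\mathcal{U}$ that works simultaneously for \emph{all} $x\in C(K)$, while keeping $\sup_j\|R_j\|_{\Lipp}$ finite. The resolution is that the finite clopen partitions of $K$, ordered by refinement, form a directed set, and for each fixed $f\in C(K)$ and $\varepsilon>0$ the set of partitions $j$ fine enough that $\|R_j f-f\|_\infty<\varepsilon$ is cofinal (by uniform continuity of $f$ on the compact zero-dimensional space $K$ together with the existence of arbitrarily fine clopen partitions); choosing $\mathcal{U}$ to contain every such cofinal upper set (i.e.\ refining the order filter of the directed set) forces $\lim_{\mathcal U}R_j f=f$ for every $f$ at once. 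Since each $R_j$ is a norm-one linear projection, $\|R_j(g)-R_j(h)\|_\infty\le\|g-h\|_\infty$ gives $\sup_j\|R_j\|_{\Lipp}\le 1$, so $M=1$ in Lemma \ref{Lemma2} and the proposition follows. A minor bookkeeping point to handle is that $C(K)$ with its linear structure has no canonical "distinguished point" issue — we take $0$ to be the zero function, and all $R_j$ being linear automatically fix it.
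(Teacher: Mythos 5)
Your proposal is correct and follows essentially the same route as the paper: index by the finite clopen partitions of $K$ (drawn from a fixed clopen basis of cardinality $\Weight(K)$) directed by refinement, use the norm-one "evaluate at a chosen point of each piece" projections $R_j$ onto the piecewise-constant subspaces $Y_j\cong\ell_\infty^{n_j}$, pass to an ultrafilter containing the upper sets of the directed order to get $\lim_{\mathcal U}R_j(f)=f$, apply Lemma \ref{Lemma2}, and finish by noting each $Y_j$ is $1$-complemented in $c_0$. All the points you flag as potential obstacles are resolved exactly as in the paper's argument.
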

\begin{proof}
Let $K$ be an infinite zero-dimensional compactum. Let $\varGamma$ be the set of all partitions of $K$ into a finite number 
of pairwise disjoint clopen sets from a fixed clopen basis with cardinality $\Weight(K)$. We consider on $\varGamma$ the following order: for every 
$i,j\in \varGamma$, $i\leq j$ if and only if $j$ is finer than $i$, that is, all elements of $i$ are union of elements of $j$. It follows that, endowed with this order, $\varGamma$ is a directed set.

For each nonempty clopen set $U$ we fix $x_U\in U$. We consider $Y_j$ as the collection of all functions that 
are constant in each element of the partition $j$. It is evident that $Y_j$ is isometric to $C(K_j)$ where $K_j$ is a finite 
set of the same cardinality as $j$. Let $R_j:C(K)\to C(K_j)$ the norm $1$ projection given by the formula
\[R_j(f)(x)=\sum_{U\in j}f(x_U)\chi_{U}.\]

Next, for each $f$ in $C(K)$ and for each $\epsilon>0$ there is a finite partition $j_0\in \varGamma$ such that
$|f(x)-f(y)|<\epsilon$ for each $x$, $y$ belonging to the same element $U\in j_0$. This implies that $\|f-R_{j_0}(f)\|<\epsilon$ and this is also true for $j\geq j_0$.

We deduce that for each $f\in C(K)$ the net $(R_j(f))_{j\in \varGamma}$ converges to $f$. Let $\mathcal{U}$ be an ultrafilter on $\varGamma$ containing all sets of the form $W_j=\{i\geq j:i \in \varGamma\}$, with $j \in \varGamma$. Then, $\lim_{\mathcal{U}}R_j(f)=f$ for every $f\in C(K)$. According to Lemma \ref{Lemma2}, 
\[\Lip(C(K))\stackrel{c}{\hookrightarrow} \left(\bigoplus_{j\in \varGamma}\Lip(C(K_j))\right)_{\ell_{\infty}}.\]
Since each $C(K_j)$ is 1-complemented in $c_0$ we deduce
\[\Lip(C(K))\stackrel{c}{\hookrightarrow} \left(\bigoplus_{j\in \varGamma}\Lip(c_0)\right)_{\ell_{\infty}}.\]
\end{proof}

%\begin{remark}
%If $\kappa$ is an infinite cardinal, from an application of \cite[Fact 4.10]{Biorthog} we have $w^*$-$\Dens(\left(\bigoplus_{\gamma<\kappa}\Lip(c_0)\right)_{\ell_{\infty}}^*)\leq \kappa$. Since 
%$w^*$-$\Dens(\Lip(C(K)))=\Weight(K)$, if $\kappa<\Weight(K)$ is an uncountable cardinal, then \[\Lip(C(K))\not\hookrightarrow\left(\bigoplus_{\gamma<\kappa}\Lip(c_0)\right)_{\ell_{\infty}}.\] 
%\end{remark}

Recall
%, see \cite[\S 1]{BenLind}, 
that a metric space is called an absolute Lipschitz retract if there is a Lipschitz retraction from each of its superspaces onto it. To conclude this section, we point out that, as it happens for $c_0$, $c_0(\varGamma)$ is an absolute Lipschitz retract, for any ininite set $\varGamma$. The proof is an adaptation of  \cite[Example 1.5]{BenLind}:

\begin{proposition}\label{Retract}For any infinite subset $\varGamma$, $c_0(\varGamma)$ is an absolute Lipschitz retract.
\end{proposition}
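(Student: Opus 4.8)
The plan is to adapt the classical argument showing that $c_0$ is an absolute Lipschitz retract (from \cite[Example 1.5]{BenLind}) to the $c_0(\varGamma)$ setting. Since the class of absolute Lipschitz retracts can be detected by embeddings into $\ell_\infty(\Lambda)$-type spaces, the key is to produce, for an arbitrary pointed metric space $M$ containing $c_0(\varGamma)$ isometrically (with $0$ as the distinguished point), a Lipschitz retraction $r\colon M\to c_0(\varGamma)$ with a constant independent of $M$.

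First I would recall the coordinate functionals: write $e^*_i\colon c_0(\varGamma)\to\R$ for the $i$-th coordinate, each $1$-Lipschitz and vanishing at $0$. Since $\Lip(c_0(\varGamma))$-functions extend to $M$ with the same Lipschitz constant (the scalar case of the extension property, i.e. McShane/Whitney extension, which costs only a factor depending on the scalar field — here a factor $1$ for real-valued functions), I can pick $1$-Lipschitz extensions $\widetilde{e^*_i}\colon M\to\R$ of each $e^*_i$. The natural candidate for the retraction is then $r(x)=\big(\varphi(\widetilde{e^*_i}(x))\big)_{i\in\varGamma}$, where $\varphi\colon\R\to\R$ is a fixed Lipschitz "truncation toward a vanishing tail" type function. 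In the $c_0$ case one composes with a map that pushes small coordinates to $0$; the subtlety is to arrange, simultaneously and with uniform control, that (a) the resulting family lies in $c_0(\varGamma)$, not merely in $\ell_\infty(\varGamma)$, (b) $r$ is Lipschitz into the sup-norm, and (c) $r$ restricts to the identity on $c_0(\varGamma)$. Concretely I would mimic \cite[Example 1.5]{BenLind}: for $x\in M$ set $\lambda(x)=\operatorname{dist}(x,c_0(\varGamma))$ (or a comparable quantity built from $\sup_i|\widetilde{e^*_i}(x)|$), and define the $i$-th coordinate of $r(x)$ to be $\widetilde{e^*_i}(x)$ truncated in absolute value so that only coordinates exceeding roughly $2\lambda(x)$ (or a fixed multiple thereof) survive, with a linear interpolation in between. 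For a genuine element $x\in c_0(\varGamma)$ one has $\lambda(x)=0$, so no truncation occurs and $r(x)=x$; for general $x$, only finitely many coordinates of $r(x)$ exceed any given $\epsilon>0$ once $\lambda(x)<\epsilon/2$, while if $\lambda(x)$ is large the only way the tail could fail to vanish is exactly the situation that never occurs (I need to check this dichotomy carefully, since $\widetilde{e^*_i}(x)$ need not itself tend to $0$ in $i$).

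The heart of the matter, and the main obstacle, is verifying that $r$ really lands in $c_0(\varGamma)$ for every $x\in M$ and that the resulting map is Lipschitz with a universal constant: the truncation level $\lambda(x)$ depends on $x$, so one must bound $\|r(x)-r(y)\|_\infty$ by simultaneously controlling the change in each coordinate $\widetilde{e^*_i}$, the change in the truncation parameter $\lambda$, and the interaction between them — exactly the kind of three-term estimate carried out in Proposition \ref{Prop1} above, and in \cite[Example 1.5]{BenLind}. I would structure that estimate by splitting into the cases where a coordinate is above the truncation window for both $x$ and $y$, below for both, or in the transition zone for at least one, and in each case bound the difference by a fixed multiple of $\max\{\,|\widetilde{e^*_i}(x)-\widetilde{e^*_i}(y)|,\ |\lambda(x)-\lambda(y)|\,\}\le C\, d(x,y)$, using that $x\mapsto\lambda(x)$ is itself $1$-Lipschitz as a distance function. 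The remaining steps — $r$ fixes $c_0(\varGamma)$ pointwise, and hence every metric superspace admits such a retraction, so $c_0(\varGamma)$ is an absolute Lipschitz retract — are then formal. I expect the only place requiring genuine care beyond bookkeeping is ruling out a "large, spread-out" tail in $r(x)$ when $x$ is far from $c_0(\varGamma)$; this is handled by the observation that $\operatorname{dist}(x,c_0(\varGamma))$ already controls how many coordinates $\widetilde{e^*_i}(x)$ can be large, because truncating all coordinates of $x$ below level $\delta$ produces a point of $c_0(\varGamma)$ at distance $\le\delta$, forcing $\lambda(x)\le\delta$ whenever the $(\delta$-)tail is infinite.
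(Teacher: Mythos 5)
Your construction is essentially the paper's: the paper defines the retraction $R:\ell_\infty(\varGamma)\to c_0(\varGamma)$ by soft-thresholding each coordinate at the level $d(x)=\operatorname{dist}(x,c_0(\varGamma))$ — using precisely your key observation that $\{i\in\varGamma: |x_i|\ge d(x)+\epsilon\}$ must be finite — and then passes to arbitrary superspaces via \cite[Lemma 1.1 and Proposition 1.2]{BenLind}, which is exactly your coordinate-wise McShane step packaged as the injectivity of $\ell_\infty(\varGamma)$. Only note that your closing justification states the implication backwards: an \emph{infinite} $\delta$-tail forces $\operatorname{dist}(x,c_0(\varGamma))\ge\delta$ (not $\le\delta$), and it is this inequality that guarantees the thresholded point lies in $c_0(\varGamma)$.
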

\begin{proof}
For each $x\in \ell_{\infty}(\varGamma)$ let $d(x)=\inf\{\|x-a\|:a\in c_0(\varGamma)\},$
that is, $d(x)$ is the distance between $x$ and $c_0(\varGamma)$. 
We claim that for each $\epsilon>0$, the set $\varGamma_{\epsilon}(x)=\{i\in \varGamma: |x_i|\geq d(x)+\epsilon\}$ is finite. For otherwise, given $\epsilon>0$ let $a=(a_i)_i\in c_0(\varGamma)$ such that $d(x)\leq \|x-a\|<d(x)+\epsilon$.
If $\varGamma_{\epsilon}(x)$ is infinite, then for any $\delta>0$ there is $i\in \varGamma_{\epsilon}(x)$ such that $|a_i|<\delta$. Hence
\[d(x)+\epsilon\leq |x_i|\leq |x_i-a_i|+|a_i|\leq \|x-a\|+\delta.\]
We deduce that $d(x)+\epsilon\leq  \|x-a\|$, a contradiction.

We may then define a map $R:\ell_{\infty}(\varGamma)\to c_0(\varGamma)$ by the formula:
\begin{displaymath}
(R(x))_i=\left\{
\begin{array}{ll}
0& \text{ if }|x_i|< d(x)\\
(|x_i|-d(x))\mathrm{sign}(x_i) &\text{ if }|x_i|\geq d(x).
\end{array} \right.
\end{displaymath}
It is easily seen that $R(a)=a$ for each $a\in c_0(\varGamma)$, and  that $R$ is  $2$-Lipschitz. The conclusion follows from \cite[Lemma 1.1 and Proposition 1.2]{BenLind}.
%And the conclusion that $c_0(\varGamma)$ is a absolute $2$-Lipschitz retract will follow from \cite[Lemma 1.1 and Proposition 1.2]{BenLind}.
\end{proof}

%({\color{blue}Obs: talvez seja interessante estudar os espaços $C_u(M)$ de funções uniformemente contínuas num métrico $M$ com a norma do sup. Também são absolute Lipschitz retracts})

\section{Proof of main results}
\label{Sec-Proofs}

We are now ready to prove our main results.

\begin{proof}[Proof of Theorem \ref{Mainresultofthepaper}]
Let $K$ be a member of the class $\mathcal{B}$. Assume that $K$ has weight $\kappa$ and admits a cellular family of cardinality $\gamma$. According to a result of H. Rosenthal, see \cite{Rosenthal}, 
$C(K)$ has a linear isometric copy of $c_0(\gamma)$. From Proposition \ref{Retract}, there is a Lipschitz retraction from $C(K)$ onto $c_0(\gamma)$. Then, from \cite[Lemma 3]{DutFec}, 
\[\Lip(c_0(\gamma))\stackrel{c}{\hookrightarrow}\Lip(C(K)).\]

On the other hand, since $K$ belongs to the class $\mathcal{B}$, there is a zero-dimensional Hausdorff compactum $L$ such that $C(K)\sim C(L)$. The compacum $L$ has the same weight as $K$. Then, from Proposition \ref{zerodim} and Corollary \ref{applic1} we have
\[\Lip(C(L))\stackrel{c}{\hookrightarrow}\left(\bigoplus_{\kappa}\Lip(c_0)\right)_{\ell_\infty}\stackrel{c}{\hookrightarrow} \Lip(c_0(\kappa))\]
and we are done since $\Lip(C(K))\sim \Lip(C(L))$.
\end{proof}

%\textcolor{magenta}{ Eu mudei um pouquinho a prova do Teorema 5 com umas informa\c c\~oes novas que o Piotr me passou. Nada muito significativo mas acrescenta mais informa\c c\~ao e assim fica mais rica, na minha opini\~ao.}
\begin{proof}[Proof of Theorem \ref{mainellinfty}]
%Following \cite[Example 7.12]{BenLind} we may 
Fix a collection $\{N_j:j\in \mathbf{c}\}$ of infinite subsets of $\N$ such that $N_j\cap N_i$ is finite whenever $i\neq j$. Let $E$ be the closed linear span in $\ell_\infty$ of the set formed by the canonical copy of $c$ (the space of convergent sequences), together with the characteristic functions $\{\chi_{N_i}:i\in\mathbf{c}\}$. The space $E$ is is isometrically isomorphic to $C(K_E)$, where $K_E$ is an Isbell-Mr\'owka compactum as mentioned in the Section \ref{sec:main}, see \cite[Section 2.1]{KoszLau}. Since $K_E$ is a Hausdorff scattered compactum of finite Cantor-Bendixson height, $E$ is Lipschitz equivalent to $c_0(\mathbf{c})$  (\cite[Theorem 7.13]{BenLind}). Because $c_0(\mathbf{c})$ is an absolute Lipschitz retract (Proposition \ref{Retract}), it follows from \cite[Proposition 1.2]{BenLind} that $E$ is a Lipschitz retract of $\ell_{\infty}$. From \cite[Lemma 3]{DutFec},
\[\F(c_0(\mathbf{c}))\sim \F(E)\stackrel{c}{\hookrightarrow}\F(\ell_{\infty}),\]
and consequently,
\[\Lip(c_0(\mathbf{c}))\stackrel{c}{\hookrightarrow}\Lip(\ell_{\infty}).\]
On the other hand, since $\ell_\infty\cong C(\beta \N)$ and $\beta\N$ is a zero-dimensional compactum of weight $\mathbf{c}$, by Theorem \ref{Mainresultofthepaper} we have
\[\Lip(\ell_\infty)\cong \Lip(C(\beta\N))\stackrel{c}{\hookrightarrow} \Lip(c_0(\mathbf{c})).\]

Since $\Lip(\ell_\infty)$ is isomorphic to $\left(\bigoplus_{n\in \N}\Lip(\ell_\infty)\right)_{\ell_\infty}$ (\cite[Theorem 3.1]{Kauf}), an application of Pe\l czy\'nski's decomposition method yields the result. 
\end{proof}
% {\color{blue}
% \section{Problems}

% We conclude by posing some natural questions

% \begin{itemize}
%     \item What about when $K\not\in \mathcal{B}$? 
%     \item What about Theorem 3 and Corollary 4 for Lipschitz-free spaces instead of $\Lip$ spaces?
% \end{itemize}
% }

\section{Acknowledgements}

We would like to thank Profs. Michal Doucha and Valentin Ferenczi for reading our manuscript and providing useful comments and Prof. Marek C\'uth for references. We are especially indebted to Prof. Piotr Koszmider for his patience in answering many questions about $C(K)$ spaces, and for the main idea behind the proof of Proposition \ref{zerodim}.

\bigskip

%({\color{blue} Precisamos retirar das referências tudo o que não foi citado})

\bibliographystyle{plain}
%\bibliography{references}

\begin{thebibliography}{99}


%\bibitem{AmLi}D. Amir and J. Lindenstrauss, 
%\emph{The structure of weakly compact sets in Banach spaces}, 
%Ann. Math. 88 (1968), 35--44.

%\bibitem{ACGJM}
%S. Argyros, J.M.F. Castillo, A.S. Granero, M. Jiménez, and J.P. Moreno, 
%\emph{Complementation and embeddings of $c_0(I)$ in Banach spaces}, 
%Proc. London Math. Soc. 85(3) (2002), 742--768.

\bibitem{AvKosh}
A. Avil\'es, P. Koszmider, 
\emph{A continuous image of a Radon-Nikod\'ym compact space which is not Radon-Nikod\'ym} 
Duke Math. J. 162 (2013), no. 12, 2285--2299.

\bibitem{BenLind}
Y. Benyamini, J. Lindenstrauss 
\emph{Geometric nonlinear function analysis Vol. 1}, 
AMS, 2000. 

\bibitem{BenEle} Y. Benyamini, M. E. Rudin, M. L. Wage
\emph{Continuous images of weakly compact subsets of Banach spaces}
Pac. J. Math  70 (2) (1977),309--324

\bibitem{CCD}
L. Candido, M. C\'uth, M. Doucha,
\emph{Isomorphisms between spaces of Lipschitz functions},
J. Funct. Anal. 277 (2019) 2697--2727.


%\bibitem{DevGodZiz}
%R. Deville, G. Godefroy, V. Zizler,
%\emph{Smoothness and
%renormings in Banach
%spaces},
%Pitman Monographs and
%Surveys in Pure and Applied Mathematics 64.


\bibitem{DutFec}
Y. Dutrieux, V. Ferenczi, 
\emph{The Lipschitz free Banach spaces of $C(K)$-spaces}, 
Proc. Amer. Math. Soc. 134 (2006) 1039--1044.

\bibitem{Fabian}
M. Fabian, P., Habala, P. H\'ajek, V. Montesinos Santaluc\'ia, J. Pelant, V. Zizler,
\emph{Functional Analysis and Infinite-Dimensional Geometry}
CMS Books in Mathematics, 2001.


\bibitem{God} 
G. Godefroy,
\emph{A survey on Lipschitz-free Banach spaces},
Comment. Math. 55 (2015) 89--118


\bibitem{GodKal}
G. Godefroy, N.J. Kalton, 
\emph{Lipschitz-free Banach spaces}, 
Studia Math. 159 (2003) 121--141, Dedicated to Professor Aleksander Pe\l czy\'nski on the occasion of his 70th birthday.

%\bibitem{GKL}
%G. Godefroy, N.J. Kalton, and G. Lancien, 
%\emph{Lipschitz isomorphisms and subspaces of $c_0(\N)$}, 
%Geom. Funct. Anal. 10 (2000), 798--820.

\bibitem{Biorthog}P. H\'ajek, V. Montesinos Santaluc\'ia, J. Vanderwerff, V. Zizler
\emph{Biorthogonal Systems in Banach Spaces}
CMS Books in Mathematics,Springer-Verlag New York, 2008

\bibitem{HajNov} P. H\'ajek, M. Novotn\'y,
\emph{Some remarks on the structure of Lipschitz-free spaces}
Bull. Belg. Math. Soc., 24 (2) (2017), 283--304.

\bibitem{Mrowka}F. Hern\'andez-Hern\'andez, M. Hru\v{s}\'ak M. 
\emph{Topology of Mr\'owka-Isbell Spaces}. 
Pseudocompact Topological Spaces, 253–289, Dev. Math., 55. Springer, Cham, 2018.

\bibitem{JohnLindSchecht} W. B. Johnson, J. Lindenstrauss,  G. Schechtman,
\emph{Banach spaces determined by their uniform structures}, Geom. \& Funct. Anal., 6(3) (1996), pp. 430--470.

\bibitem{Kauf} P. L. Kaufmann, 
\emph{Products of lipschitz-free spaces and applications}, 
(2014). Avaiable at \href{http://arxiv.org/pdf/1403.6605.pdf}{http://arxiv.org/pdf/1403.6605.pdf}

\bibitem{Kauf2} P. Kaufmann, 
\emph{Products  of  Lipschitz-free  spaces  and  applications}, 
Studia Math., 226 (2015), pp. 213--227.

\bibitem{KoszLau} P. Koszmider, N. J. Laustsen 
\emph{A Banach space induced by an almost disjoint family, admitting only few operators and decompositions}, 
(2020). Avaiable at \href{https://arxiv.org/pdf/2003.03832.pdf}{
https://arxiv.org/pdf/2003.03832.pdf}

\bibitem{Kosz}
P. Koszmider, 
\emph{Banach spaces of continuous functions with few operators}, 
Math. Ann. 330 (2004), 151--183.

%\bibitem{Lin}
%J. Lindenstrauss, 
%\emph{On nonlinear projections in Banach spaces}, 
%Mich. Math. J. 11, 263--287.

%\bibitem{Megg} R. E. Megginson, 
%\emph{An Introduction to Banach Space Theory}, 
%Graduate Texts in Mathemathics 183, 
%Springer Verlag, New York, 1998.

\bibitem{JohnLindSchet} W. B. Johnson, J. Lindenstrauss, G. Schechtman,
\emph{Banach spaces determined by their uniform structure}, 
Geom Funct Anal 6 (1996), 430--470


\bibitem{NaorSchet}
A. Naor, G. Schechtman, 
\emph{Planar Earthmover is not in $L_1$}, 
2006 47th Annual IEEE Symposium on Foundations of Computer Science (FOCS'06), Berkeley, CA, (2006), pp. 655--666.


%\bibitem{Pel}
%A. Pe\l czy\'ski, 
%\emph{A. Linear extensions, linear averagings, and their applications to linear topological classification of spaces of continuous functions}, Dissertationes Math. (Rozprawy Mat.) 58 (1968)


\bibitem{Rosenthal}H. P. Rosenthal,
\emph{On  injective  Banach   spaces  and  the  spaces $L^{\infty}(\mu)$  for  finite  measures  $\mu$},  
Acta  Math.,  124(1970), 205--248.


% \bibitem{Rosenthal1} H. Rosenthal 
% \emph{The Banach space $C(K)$}, 
% in: W.B. Johnson, J. Lindenstrauss (Eds.), Handbook of Geometry of Banach Spaces, vol. 2, North-Holland, 2003,
% Ch. 36, pp. 1547--1602.

\bibitem{Se}  Z. Semadeni,
\emph{Banach Spaces of Continuous Functions Vol. I},  
Monografie Matematyczne, Tom 55. Warsaw, PWN-Polish Scientinfic Publishers, Warsaw, 1971.

\bibitem{Weaver}
N. Weaver, 
\emph{Lipschitz Algebras},
World Scientific Publishing Co., Inc., River Edge, NJ, 1999, 2nd edition. 


\end{thebibliography}

\end{document}